\definecolor{red}{rgb}{1,0,0}
\definecolor{blue}{rgb}{.2,.2,.8}
\newtheorem{theorem}{Theorem}[section]
\newtheorem{corollary}[theorem]{Corollary}
\newtheorem{conjecture}[theorem]{Conjecture}
\newtheorem{lemma}[theorem]{Lemma}
\theoremstyle{definition}
\newtheorem{definition}[theorem]{Definition}
\newtheorem{example}[theorem]{Example}
\newtheorem{remark}[theorem]{Remark}
\def\la{\lambda}
\def\cP{\mathcal P}
\def\cB{\mathcal B}
\def\cS{\mathcal S}
\def\cD{\mathcal D}
\def\cO{\mathcal O}
\def\cR{\mathcal R}
\def\cC{\mathcal C}
\newcommand{\injclass}{\cS_k}
\newcommand{\injtwo}{\cS_2}
\def\part{\mathfrak{p}}
\newcommand{\colorclass}[2]{\cC_{#1}(#2)}
\newcommand{\colorclasscount}[2]{c_{#1}(#2)}
\newcommand{\twocolorclass}[3]{\cC_{#1}^{#2}(#3)}
\newcommand{\twocolorclasscount}[3]{c_{#1}^{#2}(#3)}
\newcommand{\mult}[2]{m_{#1}(#2)}
\DeclareMathOperator{\pre}{pre}
\begin{document}

\title{On partitions associated with elementary symmetric polynomials}

\begin{abstract} 
The elementary symmetric partition function is a map on the set of partitions. It sends a partition $\lambda$ to the partition whose parts are the summands in the evaluation of the elementary symmetric function on the parts of $\lambda$. These elementary symmetric partition functions have been studied before, and are related to plethysm. In this note, we study properties of the elementary symmetric partition functions, particularly related to injectivity and the number of parts appearing in their image partitions. 
\end{abstract}

\author[C. Ballantine]{Cristina  Ballantine}
\address{Department of Mathematics and Computer Science\\ College of the Holy Cross \\ Worcester, MA 01610, USA} 
\email{cballant@holycross.edu} 

\author[S. Nazir]{Shaheen Nazir}
\address{Mathematics Department \\ Syed Babar Ali School of Science and Engineering \\ Lahore University of Management Sciences\\ Lahore,  Pakistan}
\email{shaheen.nazir@lums.edu.pk}

\author[B.~E.~Tenner]{Bridget Eileen Tenner}
\address{Department of Mathematical Sciences, DePaul University, Chicago, IL, USA}
\email{bridget@math.depaul.edu}

\author[K. Westrem]{Karlee Westrem}
\address{Department of Mathematical Sciences \\ Appalachian State University\\ Boone, NC, USA}
\email{westremk@appstate.edu}

\author[C. Zhao]{Chenchen Zhao}
\address{Department of Mathematics \\ University of California, Davis\\ Davis, CA, USA}
\email{cczhao@ucdavis.edu}

\maketitle

\allowdisplaybreaks

\section{Introduction}\label{sec:intro}

A \emph{partition} of a positive integer $n$ is a weakly decreasing sequence $\lambda=(\lambda_1,\ldots,\lambda_\ell)$ of positive integers whose sum is $n$.   The numbers $\lambda_i$ are called the {\em parts} of $\lambda$. In~\cite{bbm}, Ballantine, Beck, and Merca introduced partitions associated with elementary symmetric polynomials. Recall that the {\em $k$th elementary symmetric polynomial} in a set of variables 
$\{x_1,\ldots,x_\ell\}$  is
$$e_k(x_1,\ldots,x_\ell) = \sum_{i_1<\cdots< i_k} x_{i_1}\cdots x_{i_k}.$$
For  example,
$$
e_2(x_1,x_2,x_3,x_4) = x_1 x_2 + x_1 x_3 + x_1 x_4 + x_2 x_3 + x_2 x_4 + x_3 x_4.
$$

Our work here studies a function on partitions.

\begin{definition}\label{defn:prek}
Given a partition $\lambda=(\lambda_1,\ldots,\lambda_\ell)$  and $k \le \ell$, the \emph{$k$th elementary symmetric partition}
$\pre_k(\la)$
is the partition whose parts are the summands in the evaluation $e_k(\la_1,\ldots,\la_\ell)$. 
When $\ell < k$, we say that $\pre_k(\lambda)$ is undefined.
\end{definition}

Note that \cite{bbms} used the convention that $\pre_k$ returned $0$ on partitions with fewer than $k$ parts. We change that convention here in order to simplify notation. This change does not meaningfully affect any of our work or results, because we are only interested in when evaluating $\pre_k$ ``makes sense.''

\begin{example}
For $\lambda=(4,3,2,2)$ and $k = 2$, we have
$$
e_2(4,3,2,2) = 4\cdot 3 + 4\cdot 2 + 4\cdot 2 + 3\cdot 2 + 3\cdot 2 + 2\cdot 2
$$
and
$$
\pre_2(4,3,2,2) = (12, 8, 8, 6, 6, 4).
$$
\end{example}

Properties of a particular family of elementary symmetric partitions for $k=2$ were studied in~\cite{bbms}. In particular, that work derived identities involving the sequences that count the parts of a given value in the image of the set of binary partitions of $n$ under $\pre_2$, where a binary partition is a partition whose parts are all powers of $2$. Here, we expand on that work. 

Given a partition $\lambda=(\lambda_1,\ldots,\lambda_\ell)$, we call $\sum_{i=1}^{\ell} \lambda_i$ the \emph{size} of $\lambda$, and $\ell$ the \emph{length} of $\lambda$. 
Although partition parts must be positive, we also define the \emph{empty partition} to be the (unique) partition of $0$. We use the following notation:
\begin{align*}
\cP(n)&=\{\la \mid  \text{$\la$ is a partition of $n$}\},\\
\part(n) &= |\cP(n)|, \text{ and}\\
\cP&=\bigcup_{n\ge0} \cP(n).
\end{align*}
For example, $\part(5)=7$ because 
$$\cP(5)=\{(5),\,  (4,1), \, (3,2), \, (3,1,1), \, (2,2,1), \, (2,1,1,1), \, (1,1,1,1,1)\}.$$

The function $\pre_k$ is only defined on partitions having at least $k$ parts, we will abuse notation slightly for ease and clarity, and refer to $\pre_k : \cP \rightarrow \cP$ or $\pre_k : \cP(n) \rightarrow \cP$, with the understanding that $\pre_k$ is only defined on the sensible subset of the relevant ``domain.''

The following is the most consequential conjecture of~\cite{bbm}.

\begin{conjecture}[\cite{bbm}]
\label{prek:con}
For any $k\ge1$ and $n\ge0$, the map $\pre_k: 
\cP(n)\to\cP$ is injective.   
\end{conjecture}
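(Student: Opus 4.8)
The plan is to attempt to reconstruct the partition $\la$ from the multiset of parts of $\pre_k(\la)$, first clearing away degenerate lengths. For $k=1$ the map $\pre_1$ is the identity, so assume $k\ge 2$. If $\la$ has length $\ell$ then $\pre_k(\la)$ has $\binom{\ell}{k}$ parts, and $\ell\mapsto\binom{\ell}{k}$ is strictly increasing for $\ell\ge k$, so the length of $\la$ can be read off from $\pre_k(\la)$; it therefore suffices to treat partitions of $n$ of a fixed length $\ell\ge k$. The case $\ell=k+1$ is then immediate: each $k$-subset of the parts omits exactly one part, so $\pre_k(\la)=\{\Pi/\la_i : 1\le i\le\ell\}$ with $\Pi=\prod_i\la_i$, the product of all parts of $\pre_k(\la)$ equals $\Pi^k$, and dividing recovers the multiset $\{\la_i\}$ --- this case does not even use the restriction to $\cP(n)$.

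For general $\ell$ I would pass to power sums. From the multiset $\pre_k(\la)$ one knows, for every $m\ge 1$, the number
\[
\sigma_m:=\sum_{q\in\pre_k(\la)}q^m=e_k(\la_1^m,\dots,\la_\ell^m).
\]
By the Newton--Girard identities $e_k$ is a fixed polynomial $P_k$ in the power sums $p_1,\dots,p_k$, and substituting $\la_i^m$ for the $i$th variable replaces $p_j$ by $p_{jm}(\la):=\sum_i\la_i^{jm}$; hence $\sigma_m=P_k\big(p_m(\la),p_{2m}(\la),\dots,p_{km}(\la)\big)$ for all $m$. The goal is to solve this system, together with $p_1(\la)=n$, for the sequence $\big(p_r(\la)\big)_{r\ge 1}$, and hence for the multiset $\{\la_1,\dots,\la_\ell\}$, which is legitimate because the parts are positive integers at most $n$. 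A further handle is the asymptotic $\sigma_m^{1/m}\to\la_1\cdots\la_k$, the product of the $k$ largest parts.

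This closes when $k=2$: there $P_2(a,b)=\tfrac12(a^2-b)$, so $p_{2m}(\la)=p_m(\la)^2-2\sigma_m$, and starting from $p_1(\la)=n$ the recursion produces $p_2(\la),p_4(\la),p_8(\la),\dots$, that is, every $p_{2^j}(\la)$; from these one recovers $\la$ by repeatedly extracting the largest part $M=\lim_j p_{2^j}(\la)^{1/2^j}$ and its multiplicity $\lim_j p_{2^j}(\la)/M^{2^j}$, subtracting it off, and inducting on the number of parts --- so $\pre_2$ is injective on each $\cP(n)$. For $k\ge 3$ the analogous step is harder, because the relation at level $m$ ties $p_m(\la)$ to the several unknowns $p_{2m}(\la),\dots,p_{km}(\la)$ and is nonlinear in $p_m(\la)$; the alternative I would pursue is to work \emph{downward} from large $m$, using $\sigma_m^{1/m}\to\la_1\cdots\la_k$ and the finer large-$m$ behavior of $\sigma_m$ to pin down the top parts of $\la$, remove them, and recurse.

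The real obstacle is the case $\ell=k$, where every device above degenerates: $\pre_k(\la)$ is then the one-part partition $(\Pi)$ with $\Pi=\prod_i\la_i$, so injectivity on $\cP(n)$ says precisely that the product of its parts determines a partition of $n$ into $k$ parts. For $k=2$ this is true --- the two parts are the roots of $x^2-nx+\Pi$ --- but for $k\ge 3$ it is false; for instance $\pre_3(9,2,2)=(36)=\pre_3(6,6,1)$ with $(9,2,2),(6,6,1)\in\cP(13)$. So the conjecture as literally stated fails, and the statement worth pursuing is its restriction to lengths $\ell>k$. For that version the remaining open range is $\ell\ge k+2$, and carrying the downward induction of the previous paragraph through for those lengths --- equivalently, proving that the power-sum system has a unique solution --- is where I expect the genuine difficulty to lie.
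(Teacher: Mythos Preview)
The paper does not prove this statement: it is presented as an open conjecture from \cite{bbm}, and the paper's own contribution is the restricted injectivity of Theorem~\ref{th:injectivity} on the class $\injclass$, together with a citation to Li's independent proof of the case $k=2$ (Theorem~\ref{th:Li}). So there is no ``paper's own proof'' to compare against.

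That said, your proposal is substantive. Your argument for $k=2$ is correct and is genuinely different from both what the paper does on $\injclass$ and from how the paper describes Li's proof. The paper's Theorem~\ref{th:injectivity} reconstructs $\lambda$ from the \emph{smallest} parts upward, using multiplicities of $1$'s and primes in $\nu$; Li, as summarized in the paper, works from the largest parts downward but only establishes uniqueness without explicit recovery. Your power-sum recursion $p_{2m}(\lambda)=p_m(\lambda)^2-2\sigma_m$, seeded by $p_1(\lambda)=n$, gives all $p_{2^j}(\lambda)$ and then extracts parts by dominant-term asymptotics; this is clean, constructive, and self-contained.

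More striking is your observation for $k\ge 3$: the counterexample $\pre_3(9,2,2)=(36)=\pre_3(6,6,1)$ with $(9,2,2),(6,6,1)\in\cP(13)$ is valid and \emph{disproves} Conjecture~\ref{prek:con} as literally stated. The paper does not notice this, and in Section~\ref{sec:open} still lists the full conjecture as open. Your diagnosis that the obstruction lives entirely at length $\ell=k$ is exactly right, and your proposed amended statement --- restrict to $\ell>k$ --- is the natural repair. The remainder of your proposal, the downward induction for $\ell\ge k+2$ and $k\ge 3$, is honestly flagged as incomplete; that part is a plan, not a proof, and the nonlinear coupling among $p_m,\dots,p_{km}$ is a real obstacle you would still need to overcome.
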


Certainly Conjecture~\ref{prek:con} is trivially true when $k=1$, because $\pre_1$ is the identity map.

The following weaker form of the conjecture was proved in~\cite{bbm}, where $\cB(n)=\{\la \mid  \text{$\la$ is a binary partition of $n$}\}$, and $\cB$ 
is defined analogously.  

\begin{theorem}[\cite{bbm}] \label{injb2}
For any $n\ge0$, the map  
$\pre_2:\cB(n)\to \cB$ is injective. 
\end{theorem}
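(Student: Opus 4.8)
The plan is to encode a binary partition by a polynomial and show that $\pre_2(\lambda)$, together with $n$, determines that polynomial. First I would record that $\pre_2$ does map $\cB(n)$ into $\cB$, since a product of two powers of $2$ is a power of $2$. Now for $\lambda\in\cB(n)$ let $m_i$ be the multiplicity of the part $2^i$ in $\lambda$, and set $P_\lambda(y)=\sum_{i\ge0}m_i\,y^i\in\Z_{\ge0}[y]$. Knowing $P_\lambda$ is the same as knowing $\lambda$, and the power sums of $\lambda$ are values of $P_\lambda$: for $r\ge0$,
\[
p_r(\lambda):=\sum_j\lambda_j^{\,r}=\sum_{i\ge0}m_i\,(2^i)^r=P_\lambda(2^r).
\]
So it suffices to prove that if $\lambda,\mu\in\cB(n)$ (each, as always, with at least two parts so that $\pre_2$ is defined) satisfy $\pre_2(\lambda)=\pre_2(\mu)$, then $P_\lambda=P_\mu$.

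The key step is to extract power-sum data from $\pre_2(\lambda)$. By definition the parts of $\pre_2(\lambda)$ are exactly the products $\lambda_i\lambda_j$ with $i<j$, so $\pre_2(\lambda)$ determines the multiset $\{\lambda_i\lambda_j: i<j\}$ and hence, for every $r\ge1$,
\[
\sum_{i<j}(\lambda_i\lambda_j)^r=e_2(\lambda_1^r,\ldots,\lambda_\ell^r)=\frac{p_r(\lambda)^2-p_{2r}(\lambda)}{2}.
\]
Thus $\pre_2(\lambda)$ determines $p_r(\lambda)^2-p_{2r}(\lambda)$ for all $r\ge1$. Since $p_1(\lambda)=n$ is fixed on $\cB(n)$, the case $r=1$ of this identity recovers $p_2(\lambda)$; the case $r=2$ then recovers $p_4(\lambda)$; and inductively $\pre_2(\lambda)$ together with $n$ determines $p_{2^k}(\lambda)=P_\lambda\!\left(2^{2^k}\right)$ for every $k\ge0$.

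To finish, suppose $\pre_2(\lambda)=\pre_2(\mu)$ with $\lambda,\mu\in\cB(n)$. By the previous step $P_\lambda\!\left(2^{2^k}\right)=P_\mu\!\left(2^{2^k}\right)$ for all $k\ge0$. The numbers $2^{2^k}=2,4,16,256,\ldots$ are pairwise distinct, so the polynomial $P_\lambda-P_\mu$ has infinitely many roots and is therefore zero; hence $\lambda=\mu$. (As a bonus this also makes the change of convention about partitions with fewer than two parts irrelevant.)

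I do not expect a genuine computational obstacle along this route; the one non-obvious move is to pass from the combinatorics of $\pre_2$ to power sums via the single Newton identity $2e_2=p_1^2-p_2$, to iterate it along the dyadic chain $r=1,2,4,8,\ldots$, and to exploit that binary partitions are precisely polynomials sampled at the infinite set $\{2^{2^k}\}$. A more hands-on alternative—reconstructing the multiplicities $m_i$ directly from the part-multiplicities of $\pre_2(\lambda)$, which satisfy a convolution-type recursion in the exponents—looks workable too, but it seems to splinter into several cases depending on the smallest and largest parts of $\lambda$, so I would prefer the power-sum argument above.
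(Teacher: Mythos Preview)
Your argument is correct. The passage from the multiset $\{\lambda_i\lambda_j:i<j\}$ to the quantities $p_r(\lambda)^2-p_{2r}(\lambda)$ via the single Newton relation $2e_2=p_1^2-p_2$ is sound, and the dyadic bootstrap $p_1\to p_2\to p_4\to\cdots$ is exactly what one needs; the polynomial encoding $P_\lambda(y)=\sum_i m_iy^i$ then turns this into an interpolation statement, and since $P_\lambda$ is a genuine polynomial (finitely many parts) and the sample points $2^{2^k}$ are infinite and distinct, you are done.

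This is a genuinely different route from the one the paper attributes to~\cite{bbm}. There (and in the present paper's Lemma~\ref{lem:injectivity on partitions with 1s} and the $k=2$ remark following it) the proof proceeds by directly and recursively reconstructing the multiplicities $m_\lambda(2^i)$ from the multiplicities $m_\nu(2^j)$ of $\nu=\pre_2(\lambda)$, exactly the ``convolution-type'' alternative you describe at the end. Your approach is cleaner and avoids any case analysis, but it leans heavily on the binary structure: it works precisely because the parts are powers of a fixed base, so that the power sums $p_r(\lambda)$ are evaluations of a single polynomial $P_\lambda$. The multiplicity-recursion approach, by contrast, is what the present paper is able to push beyond binary partitions to the class $\injclass$ in Theorem~\ref{th:injectivity}; your method does not obviously extend there, since for general partitions knowing only the dyadic power sums $p_{2^k}$ does not pin down the multiset of parts.
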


In the proof of Theorem~\ref{injb2}, it is shown that if $\lambda\in 
\cB(n)$ and $\nu=\pre_2(\lambda)$, then the multiplicities of parts in $\nu$ determine (recursively) the multiplicities of parts in $\lambda$, thus uniquely defining $\lambda$.

The case $k=2$ of Conjecture~\ref{prek:con} was recently, and in parallel with our work,  proved in~\cite{Li}. 
In the current article, we prove the injectivity of $\pre_k$ on a particular subset of 
$\cP$ for all $k\geq 2$. 
We also prove several conjectures  from~\cite{bbms} and from private correspondence with George Beck that rely only on this restricted injectivity result.  Our proof of the injectivity of $\pre_k$, while only for a subset of 
$\cP(n)$, is very different from that in~\cite{Li}. We are able to completely recover the partition $\lambda$ from its image $\nu=\pre_k(\lambda)$, and the result does not require a fixed value of $n$.

The paper is organized as follows. In Section~\ref{sec:on injectivity of pre_k}, we establish injectivity of $\pre_k$ on a large class of partitions. That result, Theorem~\ref{th:injectivity}, proves injectivity on a class that is neither a subset nor a superset of $\cP(n)$for any fixed $n$, and so our result is related to but distinct from Conjecture~\ref{prek:con}. Section~\ref{sec:consequences of our injectivity} uses the injectivity established in Theorem~\ref{th:injectivity} to extend various results from~\cite{bbm,bbms}. In Section~\ref{sec:consequences of full injectivity}, we discuss how Li's   recent result \cite{Li} relates to plethysm, and we conclude the paper in Section~\ref{sec:open} with a selection of directions for future research.

\section{On injectivity of the elementary symmetric partition}\label{sec:on injectivity of pre_k}

In this section, we prove injectivity of $\pre_k$ on a particular subset of $\cP$, which intersects  
$\cP(n)$ nontrivially. In this way, our result is simultaneously not fully addressing Conjecture~\ref{prek:con}, while also addressing a larger question.

We begin with some terminology and notation.

\begin{definition}\label{defn:multiplicity}
    For a partition $\lambda$ and a positive integer $i$, the \emph{multiplicity} of $i$ in $\lambda$, denoted 
    $$\mult{\lambda}{i},$$
    is the number of times that $i$ occurs as a part in $\lambda$. 
\end{definition}

The subset of $\cP$ on which we can show injectivity of $\pre_k$ is defined by the multiplicity of certain values.

\begin{definition}
    Let $\injclass$ be the set of partitions $\lambda$ for which 
    $$\begin{cases}\mult{\lambda}{1} \ge k,\ \text{or}\\
    \mult{\lambda}{1} = k-1 \text{ and } \mult{\lambda}{p} \ge 1 \text{ for some prime } p.
    \end{cases}$$
\end{definition}

We will prove injectivity of $\pre_k$ on $\injclass$. Note that we put no restrictions on the size of the partitions in $\injclass$, meaning that each $\injclass$ is a subset of $\cup_n \cP(n)$. Therefore, on $\injclass$, the result achieved is more general than what had been hoped for in Conjecture~\ref{prek:con}.

\begin{theorem}\label{th:injectivity} 
The map $\pre_k:\injclass\to \cP$ is injective. 
    \end{theorem}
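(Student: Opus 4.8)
The goal is to reconstruct $\lambda$ from $\nu = \pre_k(\lambda)$ whenever $\lambda \in \injclass$. The key structural observation is that the parts of $\nu$ are exactly the products $\lambda_{i_1}\cdots\lambda_{i_k}$ over $k$-subsets of indices, so in particular the smallest parts of $\nu$ come from choosing the $k$ smallest parts of $\lambda$. If $\mult{\lambda}{1} \ge k$, then $1$ is a part of $\nu$ with multiplicity $\binom{\mult{\lambda}{1}}{k}$, and conversely $1 \in \nu$ forces at least $k$ parts of $\lambda$ equal to $1$; so we can read off $\mult{\lambda}{1}$ from the multiplicity of $1$ in $\nu$. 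In the boundary case $\mult{\lambda}{1} = k-1$, there is no part equal to $1$ in $\nu$, but the hypothesis guarantees a prime part $p$, and then the smallest part of $\nu$ is $p$ (the product of the $k-1$ ones with the smallest part exceeding $1$, which must be the smallest prime present); its multiplicity and the structure of slightly larger parts let us recover $\mult{\lambda}{1}$ and begin the reconstruction. So the first step is a careful case analysis establishing that $\mult{\lambda}{1}$ is determined by $\nu$.

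**The recursive step.** Once $m_1 := \mult{\lambda}{1}$ is known, I would proceed by induction on the distinct part values of $\lambda$ in increasing order. Suppose we have determined the multiset of parts of $\lambda$ that are $\le t$ for some threshold; let $s > t$ be the next part value actually occurring in $\lambda$ (to be identified) and $\mult{\lambda}{s}$ its multiplicity. The strategy is to look at the smallest part of $\nu$ not yet ``explained'' by products formed entirely from parts $\le t$. Because the parts $\le t$ are known, we can compute the entire sub-multiset of parts of $\nu$ of the form $\lambda_{i_1}\cdots\lambda_{i_k}$ with all $\lambda_{i_j} \le t$; removing these from $\nu$, the minimal remaining part is $s \cdot 1^{k-1} \cdot(\text{product over known small parts})$ — more precisely $s$ times the product of the $k-1$ smallest parts of $\lambda$, all of which are known — and since that multiplicative factor is a known constant, we recover $s$ by division. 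Then the number of parts of $\nu$ equal to that minimal value, after subtracting the contributions already accounted for, is a known binomial-type expression in $\mult{\lambda}{s}$ and the previously-determined multiplicities, from which $\mult{\lambda}{s}$ is extracted. Iterating reconstructs all of $\lambda$.

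**Main obstacle.** The delicate point is the bookkeeping in the recursive step: when we ask for the multiplicity in $\nu$ of the ``newly minimal'' value $v = s \cdot \pi$ (where $\pi$ is the product of the $k-1$ smallest parts), we must be certain that no product involving parts strictly larger than $s$, or involving $s$ with a different multiset of companions, also equals $v$ — otherwise the count conflates contributions and the binomial inversion fails. This is where the hypothesis $m_1 \ge k-1$ does real work: having at least $k-1$ copies of $1$ means the ``cheapest'' way to involve a given part $s$ is to pair it with $k-1$ ones, making $s\cdot 1^{k-1}$ genuinely the minimal product through $s$, and one must check that the strict size increase from $t$ to $s$ prevents collisions from larger parts (a product using a part $\ge s'$ with $s' > s$ is at least $s' \cdot 1^{k-1} > v$). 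I would isolate this as a lemma: conditioned on knowing all parts $\le t$, the minimal unexplained part of $\nu$ determines $s$ uniquely and its multiplicity determines $\mult{\lambda}{s}$. The case $k \ge 3$ versus $k = 2$ may need slightly different constants in $\pi$, and the $\mult{\lambda}{1} = k-1$ opening case will require the most care, since there the "$k-1$ ones plus one larger part'' products and "$k-2$ ones plus two larger parts'' products can interleave in size; handling that cleanly, using the primality of $p$ to pin down the very smallest part of $\nu$, is the crux of the argument.
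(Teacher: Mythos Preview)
Your approach is the paper's: recover $m_1 := \mult{\lambda}{1}$ first, then inductively determine $\mult{\lambda}{y}$ for increasing $y$ from the identity
\[
\mult{\nu}{y} \;=\; \binom{m_1}{k-1}\,\mult{\lambda}{y} \;+\; \bigl(\text{terms involving only } \mult{\lambda}{d} \text{ with } 1<d<y\bigr),
\]
which is solvable for $\mult{\lambda}{y}$ precisely because $m_1 \ge k-1$. The paper indexes the induction by the integer $y$ rather than by ``next part actually present,'' but that difference is cosmetic.

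There is one concrete error. In the boundary case $m_1 = k-1$ you assert that the smallest part of $\nu$ is the smallest prime in $\lambda$; this is false. The smallest part of $\nu$ is $1^{k-1}$ times the smallest part of $\lambda$ exceeding $1$, which need not be prime: for $k=3$ and $\lambda=(7,4,1,1)\in\cS_3$ one gets $\nu=(28,28,7,4)$, whose smallest part is $4$. The fix is simpler than the analysis you outline. Any prime part $p$ of $\nu$ factors only as $p\cdot 1^{k-1}$, so its mere presence forces $m_1\ge k-1$; combined with $\mult{\nu}{1}=0$ (hence $m_1<k$) this pins down $m_1=k-1$ immediately, and that is exactly how the paper handles it. Once $m_1\ge k-1$ is known, your ``main obstacle'' evaporates: the minimal value in $\nu$ not explained by products of parts $\le t$ is exactly $s$, with residual multiplicity $\binom{m_1}{k-1}\mult{\lambda}{s}$, and no product involving a part larger than $s$ or a companion set other than $1^{k-1}$ can equal $s$. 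So the interleaving you worry about in the $m_1=k-1$ case never materializes, and no separate treatment of $k=2$ versus $k\ge 3$ is needed.
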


The proof of Theorem~\ref{th:injectivity} follows from the following two  lemmas.

\begin{lemma}\label{lem:injectivity on partitions with 1s}
    Let $k\geq 2$ and let $\nu$ be a partition in the image of  $\pre_k$. If $\mult{\nu}{1} > 0$, then $\nu$ has a unique preimage $\lambda := \pre_k^{-1}(\nu)$.
\end{lemma}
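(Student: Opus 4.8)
The plan is to extract the partition $\lambda$ from $\nu = \pre_k(\lambda)$ by reading off, in a prescribed order, how many $1$'s $\lambda$ has, what its smallest nonunit part is, and so on, recursively. The key observation is that a part equal to $1$ can appear in $\nu = \pre_k(\lambda)$ only as a product $1\cdot 1 \cdots 1$ of $k$ copies of the part $1$ in $\lambda$; every other summand in the expansion of $e_k$ involves at least one part of $\lambda$ that is $\ge 2$, hence is itself $\ge 2$. Consequently $\mult{\nu}{1} > 0$ forces $\mult{\lambda}{1} = m \ge k$, and moreover $\mult{\nu}{1} = \binom{m}{k}$. Since $\binom{m}{k}$ is strictly increasing in $m$ for $m \ge k$, the value $\mult{\nu}{1}$ determines $m = \mult{\lambda}{1}$ uniquely. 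This recovers the multiplicity of $1$ in $\lambda$.

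Next I would recover the remaining parts of $\lambda$ by induction on the number of distinct part sizes, working from smallest to largest. Suppose we have already determined $\mult{\lambda}{1} = m \ge k$ together with all parts of $\lambda$ strictly smaller than some value $a$, and let $a$ be the smallest part of $\lambda$ exceeding $1$ that has not yet been pinned down (if $\lambda$ has no part $>1$ beyond the $m$ ones, then $\lambda = (1^m)$ and we are done, the remaining parts of $\nu$ being forced). The smallest part of $\nu$ that is \emph{not} accounted for by products among the already-known parts of $\lambda$ is $a \cdot 1^{k-1} = a$, arising from choosing the part $a$ together with $k-1$ of the $m \ge k$ ones. Its multiplicity in $\nu$, after subtracting the contributions already determined, equals $\mult{\lambda}{a}\binom{m}{k-1}$ plus possibly cross terms involving $a$ and strictly smaller parts — but all of those cross terms produce parts of $\nu$ that are $\le a \cdot (\text{second smallest known part}) $, and in any case one can isolate the coefficient of the \emph{smallest} unexplained part of $\nu$ to solve for $\mult{\lambda}{a}$, since $\binom{m}{k-1} > 0$. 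Having found $\mult{\lambda}{a}$, update the accounting and repeat with the next smallest part.

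The main obstacle is bookkeeping: making precise the claim that at each stage the smallest summand of $\nu$ not yet attributed to known parts is exactly $a\cdot 1^{k-1}$ with multiplicity exactly $\mult{\lambda}{a}\binom{m}{k-1}$, and that the subtraction of previously-determined contributions is well-defined. This requires an ordering argument on the monomials of $e_k$: a monomial $\la_{i_1}\cdots \la_{i_k}$ with all indices among the known parts contributes a product $\le$ something computable, whereas any monomial involving an as-yet-unknown part is $\ge a$, with equality iff that monomial is precisely $a$ with $k-1$ trailing $1$'s. Once this ordering/stratification is set up cleanly, each step is a single division by a nonzero binomial coefficient, and uniqueness of $\lambda$ follows because every multiplicity $\mult{\lambda}{i}$ has been forced by $\nu$. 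I would also remark that the hypothesis $\mult{\nu}{1}>0$ is used only to guarantee $\mult{\lambda}{1}\ge k$, which is what makes $\binom{m}{k-1}>0$ and hence makes every part of $\lambda$ visible in $\nu$; this is exactly the feature that fails in general and motivates the second clause in the definition of $\injclass$, handled in the companion lemma.
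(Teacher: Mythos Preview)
Your proposal is correct and follows essentially the same approach as the paper: recover $\mult{\lambda}{1}$ from $\mult{\nu}{1}=\binom{\mult{\lambda}{1}}{k}$, then determine each remaining $\mult{\lambda}{y}$ by strong induction on $y$, using that the only contribution to $\mult{\nu}{y}$ involving $\mult{\lambda}{y}$ is $\binom{\mult{\lambda}{1}}{k-1}\mult{\lambda}{y}$ while all other contributions come from already-known multiplicities of strictly smaller parts. The paper runs the induction over all integers $y\ge 2$ (treating primes first, then composites via an explicit set $D_y$ of factorizations), whereas you step only through the actual part-sizes of $\lambda$ by locating the smallest unexplained part of $\nu$; this is a cosmetic difference, and the one garbled clause in your sketch (those cross terms are $\ge a\cdot(\text{second smallest known part})>a$, not $\le$) does not affect the argument.
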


\begin{proof}
    Fix some $\lambda$ for which $\pre_k(\lambda) = \nu$. We will show that the multiplicity data $\{\mult{\nu}{i}\}$ determines the multiplicity data $\{\mult{\lambda}{i}\}$, and hence $\nu$ determines $\lambda$ uniquely.

    The only way to obtain a part of $\nu = \pre_k(\lambda)$ equal to $1$ would be from $k$ parts equal to $1$ in $\lambda$, namely $\lambda_{i_1} = \cdots =\lambda_{i_k} = 1$, with $i_1 > \cdots > i_k$. Thus
    \begin{equation}\label{eqn:binomial for counting 1s}
        \binom{\mult{\lambda}{1}}{k} = \mult{\nu}{1}.
    \end{equation}
    In particular, since the sequence $\left\{\binom{n}{k}\right\}_{n\geq k}$, is strictly increasing,   Equation~\eqref{eqn:binomial for counting 1s} has a unique solution when $\mult{\nu}{1} > 0$, and thus $\mult{\lambda}{1}$ is determined by $\nu$.

     Next, we use similar reasoning to recover $\mult{\lambda}{p}$ for any prime $p$. 
 The only way to obtain a part of $\nu = \pre_2(\lambda)$ equal to $p$ would be from $k-1$ parts of $\lambda$ equal to $1$ and a single part of $\lambda$ equal to $p$. Therefore
    $$\mult{\nu}{p} = \binom{\mult{\lambda}{1}}{k-1} \cdot \mult{\lambda}{p}.$$
    Therefore, 
    $$\label{eqn:counting primes}
        \mult{\lambda}{p} = \frac{\mult{\nu}{p}}{\binom{\mult{\lambda}{1}}{k-1}},$$
    and hence, the multiplicity $\mult{\lambda}{p}$ is determined by  $\nu$.

    Now fix a composite number $y$ and assume, inductively, that for all $x < y$, we can compute $\mult{\lambda}{x}$ solely from the partition $\nu$. 
 Let $D_y$ be the collection of all  tuples $(d_1^{b_1},\ldots, d_t^{b_t})$,  satisfying 
 \begin{itemize}
 \item  $1<d_1<\cdots<d_t<y$, 
 \item $b_1,  \ldots, b_t>0$, 
 \item $b_1+ \cdots+ b_t\leq k$, and
 \item $d_1^{b_1}\cdots d_t^{b_t}=y$.
 \end{itemize}
 Then, 
     \begin{equation}\label{eqn:solving for m(y)}
         \mult{\nu}{y}=\binom{\mult{\lambda}{1}}{k-1}\mult{\lambda}{y}+ \hspace{-.1in}\sum_{(d_1^{b_1},\ldots, d_t^{b_t})\in D_y}\hspace{-.1in} \binom{\mult{\lambda}{1}}{k-(b_1+\cdots+b_t)} \binom{\mult{\lambda}{d_1}}{b_1}\cdots \binom{\mult{\lambda}{d_{t}}}{b_t}.
    \end{equation}
We can rearrange Equation~\eqref{eqn:solving for m(y)} to compute $\mult{\lambda}{y}$ from $\{\mult{\lambda}{d_{i}}\}$, $\mult{\lambda}{1}$, and $\mult{\nu}{y}$. And by induction, all of these can be recovered from $\nu$. Therefore, for all $i$, we can compute $\{\mult{\lambda}{t}\}$ entirely from $\{\mult{\nu}{t}\}$. Hence $\lambda$ is determined uniquely by $\nu$. 
\end{proof}

We demonstrate the computations of Lemma~\ref{lem:injectivity on partitions with 1s} with an example.

\begin{example} 
Let $\nu = \pre_3(\lambda)$ for $\lambda = (72, 8, 6,6, 6, 4, 3, 3, 2, 1, 1, 1)$. We want to show that $\lambda$ is the only preimage of $\nu$ under $\pre_3$. As defined in the proof of Lemma~\ref{lem:injectivity on partitions with 1s},
 \begin{align*} 
 D_{72} = \left\{(2^2,18),\!\right.& \left. (2,3,12),\,(2,4,9), \,(2,6^2), \,(2,36),  \right.\\ & \left.  (3^2,8),\, (3,4,6), \, (3,24),\, (4,18),\,  (6,12), \, (8,9)\right\}.
 \end{align*} 
 Note that $\mult{\nu}{72} = 13$, and $\mult\lambda{1} = 3$. Among all tuples in $D_{72}$, only three actually contribute to the sum in Equation~\eqref{eqn:solving for m(y)}:
 \begin{align*}
      \sum_{(d_1^{b_1},\ldots, d_t^{b_t})\in D_{72}}& \binom{\mult{\lambda}{1}}{k-(b_1+\cdots+b_t)} \binom{\mult{\lambda}{d_1}}{b_1}\cdots \binom{\mult{\lambda}{d_{t}}}{b_t}\\ 
      = & \ \binom{\mult{\lambda}{2}}{1} \binom{\mult{\lambda}{6}}{2} + \binom{\mult{\lambda}{3}}{2} \binom{\mult{\lambda}{8}}{1} + \binom{\mult{\lambda}{3}}{1}\binom{\mult{\lambda}{4}}{1} \binom{\mult{\lambda}{6}}{1}\\
      = & \ 1\cdot 3+1\cdot 1+2\cdot 1\cdot 3 = 10.
 \end{align*}
Thus Equation~\eqref{eqn:solving for m(y)} gives $13 = \binom{3}{3-1}\mult{\lambda}{72} +10$, yielding $\mult{\lambda}{72} = 1$, as desired.
\end{example}

While Lemma~\ref{lem:injectivity on partitions with 1s} shows that $\pre_k$ is injective onto the partitions with at least one part equal to $1$ (and hence from the elements of $\injclass$ with $\mult{\lambda}{1} \ge k$), the other component to proving Theorem~\ref{th:injectivity} considers partitions in the image of $\pre_k$ that have prime parts.

\begin{lemma}\label{lem:injectivity on partitions with no 1s but a prime part}
    Let $k\geq 2$ and $\nu$ be a partition in the image of $\pre_k$. If $\mult{\nu}{p} > 0$ for some prime $p$, then $\nu$ has a unique preimage $\lambda := \pre_k^{-1}(\nu)$.
\end{lemma}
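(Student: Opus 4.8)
The plan is to reuse the recursive recovery of Lemma~\ref{lem:injectivity on partitions with 1s}, showing that for \emph{any} preimage $\lambda$ of $\nu$ the multiplicity data $\{\mult{\nu}{i}\}$ determines $\{\mult{\lambda}{i}\}$; since $\nu$ is assumed to lie in the image of $\pre_k$, this yields exactly one preimage. The new ingredient is that a prime part of $\nu$ pins down $\mult{\lambda}{1}$ even when $\nu$ has no part equal to $1$. Indeed, fix $\lambda$ with $\pre_k(\lambda)=\nu$: a part of $\nu$ equal to a prime $p$ can only arise as a product of $k$ parts of $\lambda$ whose product is $p$, and since $p$ is prime this forces $k-1$ of those parts to equal $1$ and the last to equal $p$. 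Hence $\mult{\lambda}{1}\ge k-1$ for every preimage.

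Next I would split on the value of $\mult{\nu}{1}$. If $\mult{\nu}{1}>0$, then Lemma~\ref{lem:injectivity on partitions with 1s} already applies and produces the unique preimage. If $\mult{\nu}{1}=0$, then Equation~\eqref{eqn:binomial for counting 1s} reads $\binom{\mult{\lambda}{1}}{k}=0$, so $\mult{\lambda}{1}\le k-1$; combined with the previous paragraph, this forces $\mult{\lambda}{1}=k-1$ for every preimage, a value now determined by $\nu$.

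With $\mult{\lambda}{1}=k-1$ in hand, I would recover the remaining multiplicities exactly as in Lemma~\ref{lem:injectivity on partitions with 1s}. For a prime $q$, a part of $\nu$ equal to $q$ must come from $k-1$ ones together with a single part equal to $q$, and since $\binom{k-1}{k-1}=1$ this gives $\mult{\lambda}{q}=\mult{\nu}{q}$. For a composite number $y$ I would induct on $y$: any part of $\nu$ equal to $y$ arises from a product of exactly $k$ parts of $\lambda$, each at most $y$ and at most one of them equal to $y$, so Equation~\eqref{eqn:solving for m(y)} continues to hold verbatim (its leading coefficient $\binom{\mult{\lambda}{1}}{k-1}$ now equals $1$). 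Rearranging it expresses $\mult{\lambda}{y}$ in terms of $\mult{\nu}{y}$ and the multiplicities $\mult{\lambda}{1}$ and $\mult{\lambda}{d_i}$, all of which are known by induction, so $\{\mult{\lambda}{i}\}$ and hence $\lambda$ is determined by $\nu$.

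The only point needing care — and it is essentially the same bookkeeping as in Lemma~\ref{lem:injectivity on partitions with 1s} — is to confirm that Equation~\eqref{eqn:solving for m(y)} remains exactly valid when $\lambda$ has precisely $k-1$ ones: one checks that a factorization of $y$ into between $1$ and $k$ factors each exceeding $1$ can never involve a factor larger than $y$ (the remaining factors are each at least $2$), and that the number of ways to pad such a factorization out to $k$ parts using the available $1$s is $\binom{k-1}{k-(b_1+\cdots+b_t)}$, matching the coefficient in~\eqref{eqn:solving for m(y)}. I do not expect a genuine obstacle beyond this verification; the substantive new idea is just the implication that $\mult{\nu}{p}>0$ together with $\mult{\nu}{1}=0$ forces $\mult{\lambda}{1}=k-1$.
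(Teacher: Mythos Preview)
Your proposal is correct and follows essentially the same route as the paper: split on whether $\mult{\nu}{1}>0$, and in the remaining case use the prime part to force $\mult{\lambda}{1}=k-1$, after which $\mult{\lambda}{q}=\mult{\nu}{q}$ for every prime $q$ and the recursion of Equation~\eqref{eqn:solving for m(y)} recovers the composite multiplicities. The paper simply says ``from here, follow the argument in the proof of Lemma~\ref{lem:injectivity on partitions with 1s}'' where you spell out that the leading coefficient becomes $\binom{k-1}{k-1}=1$, but the content is identical.
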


\begin{proof}
    Fix some $\lambda$ for which $\pre_k(\lambda) = \nu$. As in the proof of Lemma~\ref{lem:injectivity on partitions with 1s}, we will show that $\lambda$ is unique with this property. If $\mult{\nu}{1} > 0$, then Lemma~\ref{lem:injectivity on partitions with 1s} recovers $\lambda$. Therefore, assume, for the remainder of the proof, that $\mult{\nu}{1} = 0$.
    
    The only way to obtain a part of $\nu = \pre_k(\lambda)$ equal to $p$ would be from $k-1$ different parts of $\lambda$ equal to $1$ and one part equal to $p$. Thus
    $$\mult{\nu}{p} = \binom{\mult{\lambda}{1}}{k-1}\cdot \mult{\lambda}{p}.$$ 
    Since $\mult{\nu}{p} > 0$, we need $\mult{\lambda}{1} \geq k-1$. Since $\mult{\nu}{1}=0$, we have $\mult{\lambda}{1} < k$. Thus $\mult{\lambda}{1} = k-1$. Additionally, for any prime $q$, we have $m_\lambda(q)=m_\nu(q)$. From here, we can follow the argument in the proof of Lemma~\ref{lem:injectivity on partitions with 1s}.
\end{proof}

Theorem~\ref{th:injectivity}, now, follows from Lemmas~\ref{lem:injectivity on partitions with 1s} and~\ref{lem:injectivity on partitions with no 1s but a prime part}.

As stated before, the injectivity of $\pre_k: \injclass \rightarrow \cP$ is not a subcase of Conjecture~\ref{prek:con} because it makes no assumptions about the size of $\lambda$ in $\injclass$. Indeed, injectivity is not true in general for $\pre_k : \cP \rightarrow \cP$. For example, $\pre_2(4,1) = \pre_2(2,2) = (4)$, but of course $|(4,1)| = 5$ while $|(2,2)| = 4$. Thus injectivity of $\pre_k$ on $\injclass$ and on 
$\cP(n)$ are related, but distinct, properties.

\begin{remark}
     The case $k=2$ is special. Indeed, when $\nu=\pre_2(\lambda)$ for $\lambda\in \injtwo$, there are exact recursive formulas for the multiplicities of the parts of $\lambda$ in terms of the multiplicities of the parts of $\nu$. 
     
     \begin{itemize}
         \item If $ \mult{\nu}{1} >0$, then Equation~\eqref{eqn:binomial for counting 1s}  can be rewritten as a quadratic equation with  exactly one positive solution:
        \begin{equation}\label{eqn:counting 1s}
         \mult{\lambda}{1} = \frac{1+\sqrt{1+8\mult{\nu}{1}}}{2}
        \end{equation}
        and for any  $y>1$, as $\mult{\lambda}{x}$ is  determined by $\nu$ for all $x<y$, we have  
        \begin{equation}\label{eqn: counting any y}
        \mult{\lambda}{y}=\frac{1}{\mult{\lambda}{1}}\left(\mult{\nu}{y}-\left( \sum_{\substack{d\mid y \\ 1<d<\sqrt{y}}} \mult{\lambda}{d}\mult{\lambda}{y/d} \right) -
        \binom{\mult{\lambda}{\sqrt{y}}}{2}\right),\end{equation} where $\mult{\lambda}{q} = 0$ when $q$ is not an integer.  

        \item If $\mult{\nu}{1}=0$ and $\mult{\nu}{p}>0$ for some prime $p$, then $\mult{\lambda}{1}=1$ and if $y>1$, Equation~\eqref{eqn: counting any y} determines $\mult{\lambda}{y}$ recursively. 
    \end{itemize}
\end{remark}

In~\cite{Li}, Li proved Conjecture~\ref{prek:con} for $k=2$. Li's work, proved independently of our own efforts, uses an inductive argument to establish the uniqueness of the parts of $\lambda$, in decreasing order of size. Note that \cite{Li} only determines uniqueness of the preimage under $\pre_2$, and does not actually determine the parts of that preimage. 

\begin{theorem}[{\cite[Theorem 1.2]{Li}}] \label{th:Li} 
    The map $\pre_2 : \cP(n) \rightarrow \cP$ is injective. 
    \end{theorem}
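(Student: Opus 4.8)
The plan is to show that if $\lambda,\lambda'\in\cP(n)$ satisfy $\pre_2(\lambda)=\pre_2(\lambda')=\nu$, then $\lambda=\lambda'$, by pinning down the parts of a preimage from largest to smallest. First I would record the two features of $\nu$ (together with the known value $n$) that are immediate. The partition $\nu$ has exactly $\binom{\ell}{2}$ parts, where $\ell$ is the length of $\lambda$, and $\ell\mapsto\binom{\ell}{2}$ is strictly increasing on $\ell\ge1$; hence $\lambda$ and $\lambda'$ have the same length $\ell$. Moreover the largest part of $\nu$ is $\lambda_1\lambda_2=\lambda'_1\lambda'_2$.

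The engine of the argument is a ``known multiplier'' reduction. Suppose we have already shown $\lambda_i=\lambda'_i$ for all $i\le m$, with $m\ge1$. Deleting from $\nu$ the products $\lambda_i\lambda_j$ with $i<j\le m$ — which agree for $\lambda$ and $\lambda'$ — leaves a multiset $C_m$, the same whichever preimage we use, consisting of all products $\lambda_i\lambda_j$ with $j>m$. The maximum of $C_m$ is $\lambda_1\lambda_{m+1}$, since $\lambda_1\lambda_{m+1}$ dominates every product $\lambda_i\lambda_j$ with $m<i<j$ (because $\lambda_1\ge\lambda_j$). As $\max C_m$ is independent of the preimage and $\lambda_1=\lambda'_1$ is already known, we get $\lambda_{m+1}=(\max C_m)/\lambda_1=\lambda'_{m+1}$. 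Iterating, $\lambda_1=\lambda'_1$ forces $\lambda=\lambda'$. So the theorem reduces to the single claim that the largest part of a preimage is determined by $\nu$ and $n$. (The identical reduction runs ``from the bottom'' using $\min C_m$ and the smallest part, so it would equally suffice to show the smallest part is determined.)

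The hard part is this final claim. I would argue by contradiction, assuming two preimages with $\lambda_1>\lambda'_1$. From $\lambda_1\lambda_2=\lambda'_1\lambda'_2$ and $\lambda'_1\ge\lambda'_2$ one gets the chain $\lambda_1>\lambda'_1\ge\lambda'_2>\lambda_2$, so $\lambda_1$ occurs with multiplicity one in $\lambda$, and (setting $t=\mult{\nu}{\lambda_1\lambda_2}$) one has $t=\mult{\lambda}{\lambda_2}$; when $\lambda'_1>\lambda'_2$ one also has $t=\mult{\lambda'}{\lambda'_2}$, while the case $\lambda'_1=\lambda'_2$ is dispatched using $\mult{\nu}{\lambda_1\lambda_2}=\binom{\mult{\lambda'}{\lambda'_1}}{2}$. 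I would then bring in the global constraints — $\sum\lambda_i=\sum\lambda'_i=n$, $\sum(\text{parts of }\nu)=e_2(\lambda)=e_2(\lambda')$, and the fact that $\prod(\text{parts of }\nu)=(\lambda_1\cdots\lambda_\ell)^{\ell-1}$ determines $\lambda_1\cdots\lambda_\ell$ — together with a comparison of the next-largest part of $\nu$ (which equals $\max(\lambda_2^2,\lambda_1\lambda_{t+2})$ and its primed analogue) to force a contradiction. Equivalently, one may observe that $\lambda_1$ is a divisor of $\max\nu$ lying in $[\sqrt{\max\nu},\,n-\ell+1]$, run the multiplier reconstruction of the previous paragraph for each such candidate divisor, and show that every choice but one eventually produces a non-integer, a non-weakly-decreasing sequence, a multiset not contained in $\nu$, or the wrong total $n$.

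I expect the third step to be the only genuine obstacle. The determination of the length and the multiplier reduction are essentially bookkeeping, whereas pinning down the largest part is where the arithmetic of products really enters, and where a careful case analysis — repeated versus distinct top parts, and the interaction between the divisors of $\max\nu$ and the fixed total $n$ — seems unavoidable. This matches the structure of Li's argument, which establishes uniqueness of the parts in decreasing order of size without exhibiting them explicitly.
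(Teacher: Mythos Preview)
The paper does not contain its own proof of this theorem: it is quoted from \cite{Li}, and the only description given is that Li ``uses an inductive argument to establish the uniqueness of the parts of $\lambda$, in decreasing order of size.'' Your reduction matches that description. Reading off the length $\ell$ from $\binom{\ell}{2}=\#(\text{parts of }\nu)$ is fine, and your ``known multiplier'' step is correct: once $\lambda_1,\dots,\lambda_m$ are known and agree with $\lambda'_1,\dots,\lambda'_m$, deleting the common products $\lambda_i\lambda_j$ with $i<j\le m$ from $\nu$ leaves a multiset whose maximum is $\lambda_1\lambda_{m+1}$ (your justification covers only the case $m<i<j$, but the case $i\le m<j$ is just as easy since $\lambda_i\le\lambda_1$ and $\lambda_j\le\lambda_{m+1}$). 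So the whole theorem does reduce to showing that $\lambda_1$ is determined by $\nu$ and $n$.

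That reduction, however, is the easy part, and your treatment of the remaining step is where the proposal falls short. You sketch three possible attacks --- a contradiction argument using the chain $\lambda_1>\lambda'_1\ge\lambda'_2>\lambda_2$ together with the second-largest value of $\nu$, the global constraints $e_1=n$, $e_2=|\nu|$, $e_\ell=(\prod\nu_i)^{1/(\ell-1)}$, and an algorithmic search over divisors of $\max\nu$ --- but you do not carry any of them through, and none of them is obviously sufficient. Knowing $e_1,e_2,e_\ell$ alone certainly does not determine $\lambda$ for $\ell\ge4$; the divisor-search idea is an algorithm, not a proof that exactly one candidate survives; and the second-largest-value analysis quickly branches into cases (e.g.\ whether $\lambda_2^{\,2}$ or $\lambda_1\lambda_{t+2}$ is larger, and the analogous comparison on the primed side) that you have not resolved. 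This is precisely the place where the genuine content of Li's argument lives, and your proposal leaves it as an unfulfilled plan rather than a proof.
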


\section{Consequences of Theorem~\ref{th:injectivity}}\label{sec:consequences of our injectivity}

In this section we prove several results about the total number of parts of particular fixed size ($1$ or a prime) in the  image under $\pre_k$ of certain sets of partitions of $n$.  These results were initially conjectured by Beck, some appearing in~\cite{bbms} and other communicated to us privately. In~\cite{bbm, bbms},  when only the injectivity of 
$\pre_2:\cB(n)\to \cB$
had been proved, the authors only looked at the number of parts equal to $1$, $2$, and $4$, respectively, in the image of $\cB(n)$ under $\pre_2$. However, Theorem~\ref{th:injectivity} allows us to consider the total number of parts equal to $1$, or to a prime $p$, among all partitions in 
$\pre_k(\cP(n))$, the image of $\cP(n)$ under $\pre_k$. 
More precisely, the injectivity of $\pre_k$ on $\injclass$ means that we can use information about $\lambda \in \injclass$ to infer, uniquely, results about $\pre_k(\lambda)$. We will use such inferences implicitly in the arguments of this section.

Let $n$ and $k$ be positive integers.  Define
$$a_{i,k}(n) := \hspace{-.1in} 
\sum_{\nu \in \pre_k(\cP(n))} \hspace{-.1in} \mult{\nu}{i}$$
to be the total number of parts equal to $i$ in all partitions in 
$\pre_k(\cP(n))$. 
Some of our arguments will benefit from assigning colors to parts of a partition. As such, for positive integers $n$ and $k$, let $\colorclass{k}{n}$ be the set of partitions of $n$ in which part $1$ occurs in $k$ different colors, $1_1, \ldots, 1_k$, and all other parts are uncolored, and set $\colorclasscount{k}{n}:=|\colorclass{k}{n}|$.  We order the colors so that $1_1>\cdots >1_k$. For example, $\colorclasscount{2}{3}=7$ because 
$$\colorclass{2}{3}=\left\{(3), \, (2, 1_1), \, (2,1_2),\, (1_1,1_1,1_1), \, (1_1,1_1,1_2), \, (1_1,1_2,1_2), \, (1_2,1_2,1_2)\right\}.$$
Moreover, let $\twocolorclass{k}{p}{n}$ be the set of partitions of $n$ in which part $1$ can occur in $k$ different colors as above, part $p$ can occur in two colors, $p_1>p_2$, and all other parts are uncolored. Set $\twocolorclasscount{k}{p}{n}:=|\twocolorclass{k}{p}{n}|$.  For example, $\twocolorclasscount{2}{2}{3}=9$ as
\begin{align*}
\twocolorclass{2}{2}{3}=\left\{(3), \, (2_1, 1_1), \, (2_2,1_1), \right.& \left. \!(2_1,1_2), \, (2_2,1_2), \, (1_1,1_1,1_1), \right. \\ 
& \left. (1_1,1_1,1_2),  \, (1_1,1_2,1_2),  \, (1_2,1_2,1_2)\right\}.\end{align*}
 
Some of these enumerations have been studied previously. For example, the sequence $\colorclasscount{3}{n}$ is~\cite[A014153]{OEIS}, while $\twocolorclasscount{2}{2}{n}$ is~\cite[A000097]{OEIS}.

The next theorem shows that the total numbers of $1$ and of primes $p$ in the image of $\pre_k$ is equal to the number of certain families of colored partitions. We include bijective arguments here, but can also be proved via counting arguments.

\begin{theorem}\label{thm:conj_parts_k} Let $n,k$ be positive integers and let $p$ be a prime.  Then  \begin{enumerate}[label={\textup{(\alph*)}}]\setlength{\itemsep}{.1in}
    \item $a_{1,k}(n)=\colorclasscount{k+1}{n-k}$, and 
    \item  $a_{p,k}(n)=\twocolorclasscount{k}{p}{n-p-k+1}$. 
\end{enumerate}
\end{theorem}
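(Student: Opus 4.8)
\textbf{Plan for proving Theorem~\ref{thm:conj_parts_k}.}

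The strategy is to set up bijections that directly match the quantity $a_{i,k}(n)$, which (by the injectivity result of Theorem~\ref{th:injectivity}) genuinely counts parts across distinct image partitions, with the colored-partition counts on the right-hand side. The key observation is how a part equal to $1$ or to a prime $p$ arises in $\pre_k(\lambda)$: as noted in the proofs of Lemmas~\ref{lem:injectivity on partitions with 1s} and~\ref{lem:injectivity on partitions with no 1s but a prime part}, a part equal to $1$ in $\pre_k(\lambda)$ comes from choosing an (unordered) $k$-subset of the parts of $\lambda$ equal to $1$, and a part equal to $p$ comes from choosing $k-1$ of the $1$-parts together with one $p$-part. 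So for part~(a), the total count is $a_{1,k}(n) = \sum_{\lambda \in \cP(n)} \binom{\mult{\lambda}{1}}{k}$, and for part~(b), $a_{p,k}(n) = \sum_{\lambda \in \cP(n)} \binom{\mult{\lambda}{1}}{k-1}\mult{\lambda}{p}$. (One should remark that summing over $\cP(n)$ rather than over the image is legitimate here: although $\pre_k$ is not injective on all of $\cP(n)$, every $\lambda$ for which the relevant binomial coefficient is nonzero lies in $\injclass$, so no overcounting occurs — this is exactly the point flagged in the paragraph preceding the theorem.)

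For part~(a), I would interpret $\binom{\mult{\lambda}{1}}{k}$ as the number of ways to color $k$ of the $1$-parts of $\lambda$ with a distinguished $(k+1)$st color — equivalently, the number of ways to select $k$ of the $1$-parts. Removing those $k$ selected $1$-parts leaves a partition of $n-k$ in which the remaining $1$-parts are still ``colored'' in the original color; but we want a partition in which $1$ occurs in $k+1$ colors. The cleanest bijection: a pair $(\lambda, S)$ where $\lambda \in \cP(n)$ and $S$ is a $k$-subset of the $1$-parts of $\lambda$ corresponds to a partition $\mu \in \colorclass{k+1}{n-k}$ as follows — delete the $k$ parts in $S$, and to the remaining $m := \mult{\lambda}{1} - k$ ones assign colors among $1_1,\ldots,1_{k+1}$ in weakly decreasing color-order (recall colors are ordered $1_1 > \cdots > 1_{k+1}$) in the unique way that records, for each color $1_j$ with $j \le k$, whether the ``$j$th block'' was selected. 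More simply: the number of multisets of size $m$ from a $(k+1)$-element ordered alphabet equals $\binom{m+k}{k} = \binom{\mult{\lambda}{1}}{k}$, so there is a bijection between $k$-subsets $S$ and colorings of the $m$ surviving ones in $k+1$ colors. Tracking all other (uncolored) parts of $\lambda$ unchanged, this assembles into a bijection $\{(\lambda,S)\} \leftrightarrow \colorclass{k+1}{n-k}$, and the left side has cardinality $a_{1,k}(n)$. I would write this out carefully, checking that the map is reversible: from $\mu \in \colorclass{k+1}{n-k}$, read off the color-multiplicities of $1$, reconstruct $\mult{\lambda}{1}$ and the subset $S$, and restore the uncolored parts.

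For part~(b), the analogous count is $\binom{\mult{\lambda}{1}}{k-1}\cdot\mult{\lambda}{p}$: choose $k-1$ of the $1$-parts and one of the $p$-parts. I would delete those $k-1$ ones and that one $p$, landing in $\cP(n - (k-1) - p) = \cP(n-p-k+1)$, and then encode the choices via colors: the $\binom{\mult{\lambda}{1}}{k-1}$ factor becomes, as above, a coloring of the remaining ones in $k$ colors ($\binom{m + k-1}{k-1}$ multisets from a $k$-alphabet, where $m = \mult{\lambda}{1} - (k-1)$), and the factor $\mult{\lambda}{p}$ becomes a $2$-coloring of the remaining $p$-parts (there were $\mult{\lambda}{p}$ choices of which $p$ to remove, and $\mult{\lambda}{p} = \binom{(\mult{\lambda}{p}-1) + 1}{1}$ is the number of multisets of size $\mult{\lambda}{p}-1$ from a $2$-element alphabet $p_1 > p_2$). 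Reassembling gives a bijection with $\twocolorclass{k}{p}{n-p-k+1}$, whose cardinality is $\twocolorclasscount{k}{p}{n-p-k+1}$. The main obstacle — the only genuinely delicate point — is getting the color-bookkeeping exactly right so the map is a clean bijection rather than merely an equinumerosity: one must verify that the standard bijection between $j$-subsets of an $N$-set and size-$(N-j)$ multisets over a $(j+1)$-letter ordered alphabet is applied consistently, that the ordering conventions on colors ($1_1 > \cdots > 1_{k+1}$ and $p_1 > p_2$) match, and that deleting the selected parts and recoloring the survivors is invertible. Everything else is routine; an alternative, which I would mention, is to prove both identities purely by generating-function / counting manipulation, swapping the order of summation over $\lambda$ and expanding $\sum_\lambda \binom{\mult{\lambda}{1}}{k} q^{|\lambda|}$ against the generating functions for $\colorclasscount{k+1}{n}$ and $\twocolorclasscount{k}{p}{n}$.
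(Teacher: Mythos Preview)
Your proposal is correct and takes essentially the same approach as the paper: the paper calls your pairs $(\lambda,S)$ \emph{rooted partitions} (the selected parts being the roots), and the bijection it writes down---color the unrooted $1$'s lying in each of the $k+1$ gaps determined by the $k$ rooted $1$'s with $1_1,\ldots,1_{k+1}$ respectively, then delete the roots, and analogously for the $p$'s in part~(b)---is exactly the stars-and-bars correspondence you invoke abstractly. The paper likewise remarks that a counting/generating-function argument is available as an alternative.
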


\begin{proof} 

\begin{enumerate}[label=(\alph*)]\setlength{\itemsep}{.1in} 

\item 
As in~\cite{S}, a rooted partition is a partition with one or more parts distinguished as \emph{roots}. For example $(5, 5, 3, \hat 3, 3, 1, 1, \hat 1)$ and $(5, 5, \hat 3,  3, 3, 1, \hat 1, 1)$ are two different rooted partitions of $22$, each with two roots. The partition $\nu=\pre_k(\lambda)$ has a part equal to $1$ if and only if $\mult{\lambda}{1} \ge k$. 
A part equal to $1$ in $\nu=\pre_k(\lambda)$ is obtained from a $k$-tuple  $(\lambda_{i_1}, \ldots,\lambda_{i_k})$ 
of distinct parts  all equal to $1$.
Thus, if $\cR_{k}(n)$ is the set of rooted partitions of $n$ with $k$ roots all equal to $1$, then $a_{1,k}(n)=|\cR_{k}(n)|$.  We create a bijection from $\cR_{k}(n)$ to $\colorclass{k+1}{n-k}$. If $\lambda \in \cR_{k}(n)$ has rooted  parts $\lambda_{i_j}=1$ for $1\leq j\leq k$, we replace 
 all parts of $\lambda$ equal to $1$ that are strictly before $\lambda_{i_1}$ by $1_1$; for $2\leq j\leq k$, we replace all parts strictly between $\lambda_{i_{j-1}}$ and $\lambda_{i_j}$ by $1_j$; and  all parts after $\lambda_{i_k}$ by $1_{k+1}$. Now remove the (uncolored) roots, obtaining a partition  $\mu\in \colorclass{k+1}{n-k}$.  This transformation is clearly reversible: given $\mu\in \colorclass{k+1}{n-k}$, for each $1\leq j\leq k$, insert a rooted $\hat 1$ after the last occurrence of $1_j$, and uncolor all colored parts to obtain $\lambda\in \cR_{k}(n)$. This establishes a bijection between $\cR_{k}(n)$ and $\colorclass{k+1}{n-k}$, and completes the proof. 

\item 

Fix $\lambda \in \cP(n)$. Then $\nu = \pre_k(\lambda)$ has a prime part $p$ if and only if $\lambda$ has a $k$-tuple of different parts $(p,1,1,\ldots,1)$. 
Thus, if $\cR^p_{k}(n)$ is the set of rooted partitions of $n$ with $k$ roots $\hat p, \hat 1, \hat 1, \ldots, \hat 1$, then $a^p_{1,k}(n)=|\cR^p_{k}(n)|$.  We create a bijection from $\cR^p_{k}(n)$ to $\colorclass{k+1}{n-p-k+1}$. If $\lambda \in \cR^p_{k}(n)$ has rooted parts $\lambda_{i_1}=p$, $\lambda_{i_j}=1$, $2\leq j\leq k$, we replace 
all parts of $\lambda$ equal to $p$ before $\lambda_{i_1}$ by $p_1$ and all parts of $\lambda$ equal to $p$ after $\lambda_{i_1}$ by $p_2$; the coloring of unrooted parts equal to $1$ is similar to that in the proof of part (a).  Now remove the (uncolored) roots, obtaining a partition $\mu\in \twocolorclass{k}{p}{n-p-k+1}$. As in the proof of part (a),  this transformation is clearly reversible. 
This establishes a bijection between $\cR^p_{k}(n)$ and $\twocolorclass{k+1}{p}{n-p-k+1}$, and completes the proof. 
\end{enumerate} 
\end{proof}

The generating functions for the sequences $\colorclasscount{k+1}{n}$ and $\twocolorclasscount{k}{p}{n}$  are easily derived from the definitions of these sequences. Then Theorem~\ref{thm:conj_parts_k} leads to the generating functions for the sequences $a_{1,k}(n)$ and $a_{p,k}(n)$, generalizing~\cite[Theorems~4(a) and~6(a)]{bbms}.

\begin{corollary} \label{gf_a_p,k} For $k\geq 1$ and $p$ either $1$ or a prime, we have the generating function
$$\sum_{n\geq 0}a_{p,k}(n)q^n = \frac{q^{p+k-1}}{(1-q)^{k-1}(1-q^p)(q;q)_\infty}.$$
    \end{corollary}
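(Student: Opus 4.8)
The plan is to combine Theorem~\ref{thm:conj_parts_k} with the standard product formula for generating functions of colored partitions, so that no work beyond routine bookkeeping of infinite products remains.

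First I would record the generating functions for the two colored-partition sequences appearing in Theorem~\ref{thm:conj_parts_k}. A partition in $\colorclass{k+1}{m}$ amounts to an unrestricted choice of parts equal to $1$ in each of $k+1$ colors together with an unrestricted partition into parts $\geq 2$, so
$$\sum_{m\geq 0}\colorclasscount{k+1}{m}\,q^m=\frac{1}{(1-q)^{k+1}}\prod_{j\geq 2}\frac{1}{1-q^j}.$$
Since $(q;q)_\infty=(1-q)\prod_{j\geq 2}(1-q^j)$, the product over $j\geq 2$ equals $(1-q)/(q;q)_\infty$, and the right-hand side simplifies to $\bigl((1-q)^{k}(q;q)_\infty\bigr)^{-1}$. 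Similarly, a partition in $\twocolorclass{k}{p}{m}$ decomposes as parts equal to $1$ in $k$ colors, parts equal to $p$ in $2$ colors, and an unrestricted partition into parts $\geq 2$ other than $p$, giving
$$\sum_{m\geq 0}\twocolorclasscount{k}{p}{m}\,q^m=\frac{1}{(1-q)^{k}}\cdot\frac{1}{(1-q^p)^{2}}\prod_{\substack{j\geq 2\\ j\neq p}}\frac{1}{1-q^j}=\frac{1}{(1-q)^{k-1}(1-q^p)(q;q)_\infty},$$
where for the last equality I use $(q;q)_\infty=(1-q)(1-q^p)\prod_{j\geq 2,\,j\neq p}(1-q^j)$, valid because $p\geq 2$.

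Next I would apply Theorem~\ref{thm:conj_parts_k}. For a prime $p$, part~(b) gives $a_{p,k}(n)=\twocolorclasscount{k}{p}{n-p-k+1}$, hence $\sum_{n\geq 0}a_{p,k}(n)q^n=q^{p+k-1}\sum_{m\geq 0}\twocolorclasscount{k}{p}{m}q^m$, which is exactly the claimed expression. For $p=1$, part~(a) gives $a_{1,k}(n)=\colorclasscount{k+1}{n-k}$, so $\sum_{n\geq 0}a_{1,k}(n)q^n=q^{k}\bigl((1-q)^{k}(q;q)_\infty\bigr)^{-1}$; specializing the stated formula to $p=1$ yields $q^{k}\bigl((1-q)^{k-1}(1-q)(q;q)_\infty\bigr)^{-1}$, the same series. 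This settles both cases.

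There is no genuine obstacle here: the substantive content is Theorem~\ref{thm:conj_parts_k}, and what remains is a routine manipulation of infinite products. The only points requiring a little care are (i) peeling off the correct factors of $(q;q)_\infty$ so that the ``parts $\geq 2$'' (respectively ``parts $\geq 2$ other than $p$'') generating function is written correctly, which is where $p\geq 2$ is used in the prime case, and (ii) verifying that the $p=1$ instance of part~(a) of Theorem~\ref{thm:conj_parts_k} is consistent with the $p=1$ specialization of the displayed formula, as checked above.
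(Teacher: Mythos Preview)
Your proposal is correct and follows precisely the approach the paper indicates: derive the generating functions for $\colorclasscount{k+1}{n}$ and $\twocolorclasscount{k}{p}{n}$ directly from their definitions as product formulas, then apply Theorem~\ref{thm:conj_parts_k} to shift by the appropriate power of $q$. The paper in fact gives no further details beyond that one-sentence outline, so your write-up is a faithful and complete expansion of the intended argument.
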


Next, we generalize~\cite[Theorems 4(b) and 6(b)]{bbms}.

\begin{theorem} 
If $k\geq 2, n\geq 0$, and $p$ is a prime, then 
\begin{align*}a_{1,k}(n)& =\sum_{i\geq 1}\binom{i-1}{k-1}\part(n-i), \text{ and}\\ a_{p,k}(n)& =\sum_{i, j\geq 1}\binom{i-1}{k-2}\part(n-i-pj),\end{align*}
where $\part$ is the partition function defined in Section~\ref{sec:intro}.
    \end{theorem}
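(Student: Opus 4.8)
The plan is to derive the two summation formulas directly from the generating functions recorded in Corollary~\ref{gf_a_p,k}, rather than from the bijections of Theorem~\ref{thm:conj_parts_k}. For part (a), I would expand
$$\sum_{n\geq 0}a_{1,k}(n)q^n = \frac{q^{k}}{(1-q)^{k-1}(q;q)_\infty}$$
by writing $\frac{1}{(q;q)_\infty}=\sum_{m\geq 0}\part(m)q^m$ and $\frac{1}{(1-q)^{k-1}}=\sum_{r\geq 0}\binom{r+k-2}{k-2}q^r$. Multiplying these and the factor $q^k$ together, the coefficient of $q^n$ is $\sum_{r+m=n-k}\binom{r+k-2}{k-2}\part(m)$. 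Setting $i=r+k-1$ (so that $r=i-k+1$ and $m=n-i$, with $\binom{r+k-2}{k-2}=\binom{i-1}{k-2}$ — wait, I must be careful with the binomial index), I would reindex so that the claimed form $\sum_{i\geq1}\binom{i-1}{k-1}\part(n-i)$ emerges. Concretely, combine the $q^k$ with one factor of $\frac1{1-q}$ from the $(1-q)^{-(k-1)}$, or equivalently note $\frac{q^k}{(1-q)^{k-1}}=\sum_{i\geq k}\binom{i-1}{k-1}q^i$ (a standard identity: the generating function for $k$-subsets), so that
$$\frac{q^k}{(1-q)^{k-1}(q;q)_\infty}=\Bigl(\sum_{i\geq k}\binom{i-1}{k-1}q^i\Bigr)\Bigl(\sum_{m\geq0}\part(m)q^m\Bigr),$$
and reading off the coefficient of $q^n$ gives exactly $a_{1,k}(n)=\sum_{i\geq1}\binom{i-1}{k-1}\part(n-i)$, where terms with $i<k$ vanish because $\binom{i-1}{k-1}=0$ there and terms with $i>n$ vanish because $\part(n-i)=0$.

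For part (b), I would run the analogous computation with the generating function
$$\sum_{n\geq 0}a_{p,k}(n)q^n = \frac{q^{p+k-1}}{(1-q)^{k-1}(1-q^p)(q;q)_\infty}.$$
Here I split $q^{p+k-1}=q^{k-1}\cdot q^p$, use $\frac{q^{k-1}}{(1-q)^{k-1}}=\sum_{i\geq k-1}\binom{i-1}{k-2}q^i$, and use $\frac{q^p}{1-q^p}=\sum_{j\geq1}q^{pj}$. Together with $\frac1{(q;q)_\infty}=\sum_{m\geq0}\part(m)q^m$, the coefficient of $q^n$ in the product is $\sum_{i,j\geq1}\binom{i-1}{k-2}\part(n-i-pj)$, which is precisely the claimed formula (again the constraints $i\geq k-1$ and $n-i-pj\geq0$ are enforced automatically by vanishing of the binomial or of $\part$).

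The only mild subtlety — and the step I would treat most carefully — is confirming the identity $\frac{q^{k}}{(1-q)^{k-1}}=\sum_{i\geq k}\binom{i-1}{k-1}q^i$ (and its shift for part (b)), i.e.\ keeping the binomial coefficient indices straight so that the final answer matches the stated $\binom{i-1}{k-1}$ and $\binom{i-1}{k-2}$ exactly; this is the combinatorial fact that the number of ways to write $i$ as an ordered sum of $k$ positive parts, or equivalently to choose a $k$-element subset of $\{1,\dots,i\}$ whose largest element one tracks, is $\binom{i-1}{k-1}$. Everything else is a routine convolution of formal power series. I should also note, for $k\geq2$, that in part (a) the condition $\mathrm{m}_\lambda(1)\geq k$ giving rise to a part $1$ in $\pre_k(\lambda)$ is exactly what Corollary~\ref{gf_a_p,k} already encodes, so no separate case analysis for small $n$ is needed. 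Alternatively, both identities can be obtained directly from Theorem~\ref{thm:conj_parts_k} by writing $\colorclasscount{k+1}{m}$ and $\twocolorclasscount{k}{p}{m}$ as convolutions of $\part$ with the enumerators of colored $1$'s (and colored $p$'s), which reproduces the same sums; I would include whichever derivation is shortest.
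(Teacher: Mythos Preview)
Your generating-function approach is sound and genuinely different from the paper's argument. The paper proceeds combinatorially: it interprets $a_{1,k}(n)$ (resp.\ $a_{p,k}(n)$) as the cardinality of the set $\cR_k(n)$ (resp.\ $\cR^p_k(n)$) of rooted partitions, then decomposes each rooted partition according to the position of the first root $\hat 1$ (and, in the prime case, also $\hat p$) into a pair $(\alpha,\beta)$, counting the $\binom{i-1}{k-1}$ or $\binom{i-1}{k-2}$ ways to place the remaining roots in $\beta$. Your route via Corollary~\ref{gf_a_p,k} is more mechanical once the generating function is available; the paper's route is self-contained and does not invoke that corollary.

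There is, however, a slip in your part~(a). When you specialize $p=1$ in Corollary~\ref{gf_a_p,k}, the factor $(1-q^p)$ contributes an additional $(1-q)$, so the generating function is
\[
\sum_{n\geq 0}a_{1,k}(n)\,q^n \;=\; \frac{q^{k}}{(1-q)^{k}\,(q;q)_\infty},
\]
with exponent $k$ on $(1-q)$, not $k-1$ as you wrote. Correspondingly, the identity you need is
\[
\frac{q^k}{(1-q)^{k}} \;=\; \sum_{i\geq k}\binom{i-1}{k-1}q^i
\]
(compositions of $i$ into $k$ positive parts), whereas the expression $\frac{q^k}{(1-q)^{k-1}}$ that you wrote actually expands as $\sum_{i\geq k}\binom{i-2}{k-2}q^i$. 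Your two errors cancel, but each line as written is incorrect. Once you fix the exponent, the convolution with $\sum_{m\geq 0}\part(m)q^m$ gives the claimed formula exactly as you describe. Your part~(b) is correct as written.
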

    
\begin{proof} The proofs are similar to those of~\cite[Theorems 4(b)]{bbms}. Recall from the proof of Theorem~\ref{thm:conj_parts_k} that $$a_{p,k}(n)=\begin{cases} |\cR_{k}(n)| & \text{ if } p=1, \text{ and} \\ |\cR^p_{k}(n)| & \text{ if } p \text{ is a prime}.
    \end{cases}$$
A rooted partition $\lambda \in \cR_{k}(n)$ corresponds to a pair of partitions $(\alpha, \beta)$, where $\alpha$ consists of the parts of $\lambda$ before the first $\hat 1$
 and $\beta$ consists of the parts of $\lambda$  after the first $\hat 1$ with hats removed. The first $\hat 1$ does not belong to either partition. If the first $\hat 1$ is in the $i$th entry from the right in $\lambda$, then $\alpha \in \cP(n-i)$ and $\beta$ consists of $i-1$ parts equal to $1$. For each such pair, there are $\binom{i-1}{k-1}$ ways to choose $k-1$ roots in $\beta$. Summing over $i$ completes the proof of the first identity. The second identity is proved similarly with the partition $\beta$ consisting of all parts after the first $\hat 1$ and all parts equal to $p$ after $\hat p$. The first $\hat 1$ and $\hat p$ are in neither $\alpha$ nor $\beta$, and we assume that the first $\hat p$ is followed by $j-1$ parts $p$. 
\end{proof}

The authors of~\cite{bbms} also prove results involving forward differences of sequences. 
For a sequence $s(n)$, we denote its forward difference by 
$$\Delta s(n):=s(n+1)-s(n).$$
For example, the forward differences $\Delta a_{1,2}(n)$, $\Delta a_{2,2}(n)$, and $\Delta a_{3,2}(n)$  are sequences A000070, A024786, A024787 in~\cite{OEIS}, respectively.

In general, we have the following result for $\Delta a_{p,k}(n)$. As with Theorem~\ref{thm:conj_parts_k}, we provide a bijective proof, but we can also prove the result analytically using generating functions.

\begin{theorem}\label{fwd}
For $k\geq 2$ and  $p$ equal to $1$ or a prime, we have 
\begin{equation}\label{fwd1} \Delta a_{p,k}(n)=a_{p,k-1}(n).\end{equation}
\end{theorem}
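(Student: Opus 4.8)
The plan is to give a bijective proof of~\eqref{fwd1} by relating the rooted-partition models for $a_{p,k}(n)$ and $a_{p,k-1}(n)$ established in the proof of Theorem~\ref{thm:conj_parts_k}. Recall that $\Delta a_{p,k}(n) = a_{p,k}(n+1) - a_{p,k}(n)$, so the goal is to construct an injection from $\cR_k(n)$ (or $\cR_k^p(n)$) into $\cR_k(n+1)$ (resp. $\cR_k^p(n+1)$) whose complement is in bijection with $\cR_{k-1}(n)$ (resp. $\cR_{k-1}^p(n)$). For the case $p=1$, the natural injection $\cR_k(n) \hookrightarrow \cR_k(n+1)$ is to adjoin one additional \emph{unrooted} part equal to $1$, placed at the very end of the partition (after all $k$ roots). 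The image of this map is exactly the set of rooted partitions in $\cR_k(n+1)$ whose last part is an unrooted $1$; so the complement consists of those $\lambda \in \cR_k(n+1)$ whose final part is either larger than $1$ or is a rooted $\hat 1$.

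The heart of the argument is then to show that this complement is in bijection with $\cR_{k-1}(n)$. Given $\lambda \in \cR_{k-1}(n)$ — a partition of $n$ with $k-1$ roots all equal to $1$ — I would append a \emph{rooted} $\hat 1$ at the very end, producing a partition of $n+1$ with $k$ roots all equal to $1$, i.e.\ an element of $\cR_k(n+1)$ whose last part is a rooted $\hat 1$. Conversely, if $\lambda \in \cR_k(n+1)$ has last part an unrooted part $> 1$, one must first slide that configuration to the canonical form; the cleanest route is to observe that a rooted partition in the complement is uniquely determined by the data of its underlying partition of $n+1$ together with the positions of its roots, and the position of the \emph{last} root (reading from the right, it is the rightmost occurrence of $1$ among the roots) can be ``absorbed'': delete the rightmost rooted $1$ entirely, giving a partition of $n$ with $k-1$ roots equal to $1$. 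The subtlety is that deleting a part equal to $1$ from a partition whose smallest part exceeds $1$ is not the inverse of appending one, so I would organize the complement differently: split $\cR_k(n+1)$ according to whether the rightmost part equal to $1$ (rooted or not) comes after all roots or not, matching the ``append unrooted $1$'' image exactly, and let everything else map via ``delete the rightmost root'' to $\cR_{k-1}(n)$.

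For the prime case, the argument is parallel but with the prime part $p$ playing the structural role: the rooted partitions in $\cR_k^p(n)$ carry one root $\hat p$ and $k-1$ roots $\hat 1$. The injection $\cR_k^p(n) \hookrightarrow \cR_k^p(n+1)$ again adjoins an unrooted $1$ at the end, and the complement — rooted partitions of $n+1$ not ending in an unrooted $1$ — should be shown to biject with $\cR_{k-1}^p(n)$ by deleting the rightmost rooted $\hat 1$ (which exists since $k \ge 2$ guarantees at least one $\hat 1$ root). One must check that after deletion the remaining $k-2$ unit-roots plus the $\hat p$ root form a legitimate element of $\cR_{k-1}^p(n)$, and that the map is reversible; the role of the hypothesis $k \ge 2$ is precisely to ensure there is a $\hat 1$ root available to remove.

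The main obstacle I anticipate is the bookkeeping around parts equal to $1$ versus the roots: since all roots in the $p=1$ case are $1$'s, appending an unrooted $1$ and deleting a rooted $1$ interact, and one must be careful that the two pieces of the partition of $\cR_k(n+1)$ — the ``append'' image and the ``delete a root'' preimage of $\cR_{k-1}(n)$ — are genuinely disjoint and exhaustive. I would pin this down by using the linear order on positions and classifying each $\lambda \in \cR_k(n+1)$ by the position of its rightmost $\hat 1$ root relative to the rightmost part equal to $1$: if they coincide (the last $\hat 1$ root is literally the last part), it came from $\cR_{k-1}(n)$ by appending a rooted $\hat 1$; otherwise there is at least one unrooted $1$ after the last root, and peeling off one such $1$ lands in the image of the injection from $\cR_k(n)$. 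Once this trichotomy-free dichotomy is set up correctly, the rest is routine verification that the maps are mutually inverse and size-preserving in the claimed way. Alternatively, as remarked in the statement, one can bypass the combinatorics entirely: from Corollary~\ref{gf_a_p,k}, $\sum_n a_{p,k}(n)q^n = \frac{q}{1-q}\sum_n a_{p,k-1}(n)q^n$, and multiplying a generating function by $\frac{q}{1-q}$ realizes exactly the partial-sum-shift operation whose forward difference recovers the original sequence, giving~\eqref{fwd1} immediately.
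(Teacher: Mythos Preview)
Your approach is correct and essentially identical to the paper's: inject $\cR_k(n)$ (resp.\ $\cR_k^p(n)$) into the analogous set at $n+1$ by appending an unrooted $1$, and put the complement in bijection with $\cR_{k-1}(n)$ (resp.\ $\cR_{k-1}^p(n)$) by deleting the last rooted $\hat 1$. Your generating-function alternative is also valid and is exactly the analytic route the paper alludes to.

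The one place you make life harder for yourself than necessary is the ``main obstacle'' paragraph. The case ``last part is an unrooted part $>1$'' never occurs: for $k\ge 2$, every element of $\cR_k(n+1)$ or $\cR_k^p(n+1)$ has at least one root equal to $\hat 1$, hence at least one part equal to $1$, so the smallest (last) part is always $1$. Thus the complement of ``last part is an unrooted $1$'' is simply ``last part is a rooted $\hat 1$'', and your dichotomy collapses to exactly the clean two-case split the paper uses, with no sliding or repositioning needed. Once you observe this, the bookkeeping you were worried about disappears entirely.
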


\begin{proof} 
Once again, we use rooted partitions. 
Throughout the proof, the variable $p$ is either $1$ or a prime, and we maintain the notation for rooted partitions from the proof of Theorem~\ref{thm:conj_parts_k}. For the duration of this proof, write $\widetilde{\cR}^p_k(n)$ to mean $\cR_{k}(n)$ if $p=1$, and $\cR^p_{k}(n)$ if $p$ is prime. 

We want to interpret the difference $a_{p,k}(n+1)-a_{p,k}(n)$. 
We have a bijection between $\widetilde{\cR}^p_{k}(n)$ and the subset of rooted partitions in $\widetilde{\cR}^p_{k}(n+1)$ with last part unrooted, defined by appending an unrooted $1$ to the end of an element of $\widetilde{\cR}^p_{k}(n)$. Thus, $a_{p,k}(n+1)-a_{p,k}(n)$ equals the number of rooted partitions in $\widetilde{\cR}^p_{k}(n+1)$ with last part rooted. These partitions are in bijection with  $\widetilde{\cR}^p_{k-1}(n)$, the bijection being to remove last (rooted $\hat 1$) part. Clearly, this transformation is reversible and $\Delta a_{p,k}(n)=|\widetilde{\cR}^p_{p,k-1}(n)|$, completing proof. 
\end{proof}

As in Definition~\ref{defn:multiplicity}, let 
$\mult{n}{i}$
denote the total number of parts equal to $i$ among all partitions of $n$. That is,
$$\mult{n}{i} = \sum_{\lambda\in \cP(n)} \mult{\lambda}{i}.$$
With this statistic, we single out the case $k=2$ of Theorem~\ref{fwd}. 

\begin{corollary}\label{conj_parts_2} 
If $p$ is equal to $1$ or a prime, then the forward difference $\Delta a_{p,2}(n)$ is equal to $\mult{n}{p}$. Moreover, the generating function for $\mult{n}{p}$ is 
$$\sum_{n\geq 0}\mult{n}{p}q^n=\frac{q^p}{(1-q^p)(q;q)_\infty}.$$
    \end{corollary}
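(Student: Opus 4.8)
The plan is to derive Corollary~\ref{conj_parts_2} as the $k=2$ specialization of Theorem~\ref{fwd}, together with an identification of $a_{p,1}(n)$ with $\mult{n}{p}$. Setting $k=2$ in Equation~\eqref{fwd1} gives $\Delta a_{p,2}(n) = a_{p,1}(n)$, so the first assertion reduces to showing $a_{p,1}(n) = \mult{n}{p}$ for $p$ equal to $1$ or a prime. For $k=1$, the map $\pre_1$ is the identity, so $\pre_1(\cP(n)) = \cP(n)$, and therefore
$$a_{p,1}(n) = \sum_{\nu \in \pre_1(\cP(n))} \mult{\nu}{p} = \sum_{\nu \in \cP(n)} \mult{\nu}{p} = \mult{n}{p},$$
which is exactly the claim. (One should double-check that Theorem~\ref{fwd} was stated for $k \ge 2$, so that $k=2$ is the smallest allowable case and the right-hand side $a_{p,1}(n)$ is the genuine $k=1$ count; this is consistent with $\pre_1$ being the identity.)

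For the generating function, I would derive it from Corollary~\ref{gf_a_p,k}. That corollary gives, for $k \ge 1$,
$$\sum_{n \ge 0} a_{p,k}(n) q^n = \frac{q^{p+k-1}}{(1-q)^{k-1}(1-q^p)(q;q)_\infty}.$$
Specializing to $k=1$ yields
$$\sum_{n \ge 0} \mult{n}{p} q^n = \sum_{n \ge 0} a_{p,1}(n) q^n = \frac{q^{p}}{(1-q^p)(q;q)_\infty},$$
since the factor $(1-q)^{k-1}$ becomes $(1-q)^0 = 1$ and the numerator exponent $p+k-1$ becomes $p$. This is precisely the stated generating function, so no further computation is needed.

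Alternatively, and perhaps cleaner as a self-contained check, the generating function for $\mult{n}{p}$ can be obtained directly: a part equal to $p$ occurring in a partition of $n$ can be "marked," and marked-part-partitions of $n$ are counted by $q^p \cdot \frac{1}{1-q^p} \cdot \frac{1}{(q;q)_\infty}$, where $q^p$ accounts for the marked part, $\frac{1}{1-q^p}$ for the remaining (unmarked) parts equal to $p$, and $\frac{1}{(q;q)_\infty}$ for all other parts. Summing the number of parts equal to $p$ over all partitions of $n$ is the same as counting partitions-with-one-marked-$p$, which gives exactly $\sum_n \mult{n}{p} q^n = \frac{q^p}{(1-q^p)(q;q)_\infty}$. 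I would include whichever derivation the paper's style favors; deriving it from Corollary~\ref{gf_a_p,k} keeps the argument short.

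There is no real obstacle here: the corollary is a pure specialization of already-established results, so the only thing to be careful about is bookkeeping — confirming that the $k=1$ instances of the formulas in Theorem~\ref{fwd} and Corollary~\ref{gf_a_p,k} are valid (Corollary~\ref{gf_a_p,k} is explicitly stated for $k \ge 1$, and the $k=1$ value $a_{p,1}(n)$ is well-defined as the count coming from the identity map $\pre_1$) and that the indexing of forward differences matches the intended statement $\Delta a_{p,2}(n) = \mult{n}{p}$.
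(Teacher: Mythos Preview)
Your proposal is correct and matches the paper's approach exactly: the paper presents this corollary without proof, simply stating that it ``singles out the case $k=2$ of Theorem~\ref{fwd},'' and your write-up supplies precisely the two obvious ingredients---the identification $a_{p,1}(n)=\mult{n}{p}$ via $\pre_1=\mathrm{id}$, and the $k=1$ specialization of Corollary~\ref{gf_a_p,k} for the generating function.
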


Iterating~\eqref{fwd1}  proves the last part of \cite[Conjecture 18]{bbms}. 

\begin{theorem} Let $k\geq 2$, $n\geq 0$. If $p$ is equal to $1$ or a prime, then
$$\Delta^{k-1}a_{p,k}(n) = \mult{n}{p}.$$
\end{theorem}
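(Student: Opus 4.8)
The plan is to deduce the statement by iterating the recursion in Theorem~\ref{fwd} and then invoking Corollary~\ref{conj_parts_2} as the base case. Equation~\eqref{fwd1} says that for every $j \ge 2$ and every $n \ge 0$ we have $\Delta a_{p,j}(n) = a_{p,j-1}(n)$, valid whenever $p$ is $1$ or a prime. Since the forward-difference operator $\Delta$ commutes with itself, applying $\Delta^{k-2}$ to both sides of~\eqref{fwd1} (with $j = k$) gives $\Delta^{k-1} a_{p,k}(n) = \Delta^{k-2} a_{p,k-1}(n)$. Iterating this identity downward through $j = k, k-1, \ldots, 3$ yields
\begin{equation*}
\Delta^{k-1} a_{p,k}(n) = \Delta^{k-2} a_{p,k-1}(n) = \cdots = \Delta^{1} a_{p,2}(n).
\end{equation*}
The only subtlety here is that~\eqref{fwd1} is stated for $k \ge 2$, i.e. its left side involves $a_{p,k}$ with $k \ge 2$ and its right side $a_{p,k-1}$ with $k-1 \ge 1$; so the chain of equalities is legitimate precisely down to the term $\Delta a_{p,2}(n) = a_{p,1}(n)$, and stopping at $\Delta a_{p,2}(n)$ is safe.

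It then remains to identify $\Delta a_{p,2}(n)$ with $\mult{n}{p}$. This is exactly the content of the first sentence of Corollary~\ref{conj_parts_2}: $\Delta a_{p,2}(n) = \mult{n}{p}$ when $p$ is $1$ or a prime. Combining the displayed chain of equalities with Corollary~\ref{conj_parts_2} gives $\Delta^{k-1} a_{p,k}(n) = \mult{n}{p}$, which is the claim. (Alternatively, one could note that $a_{p,1}(n) = \mult{n}{p}$ directly from the definition of $a_{p,1}(n) = \sum_{\nu \in \pre_1(\cP(n))} \mult{\nu}{p} = \sum_{\lambda \in \cP(n)} \mult{\lambda}{p}$, since $\pre_1$ is the identity; this makes the base case self-contained and removes the dependence on the ``Moreover'' part of Corollary~\ref{conj_parts_2}.)

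There is essentially no hard step: the entire argument is a finite induction on $k$ using a single-step recursion that has already been established bijectively. If one wants to present it as a clean induction rather than an iterated substitution, the inductive hypothesis is ``$\Delta^{k-2} a_{p,k-1}(n) = \mult{n}{p}$ for all $n$,'' the base case $k = 2$ is Corollary~\ref{conj_parts_2} (or the direct computation above), and the inductive step applies $\Delta^{k-2}$ to Equation~\eqref{fwd1}. The one point worth a sentence of care in the write-up is that we are applying the operator identity $\Delta^{k-2} \circ \Delta = \Delta^{k-1}$, which is immediate since powers of $\Delta$ commute, together with the fact that Theorem~\ref{fwd} holds for \emph{all} $n$, so that differencing a term-by-term identity is valid. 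No estimates, no case analysis, and no appeal to the injectivity results of Section~\ref{sec:on injectivity of pre_k} are needed for this particular statement.
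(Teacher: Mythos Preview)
Your proposal is correct and matches the paper's own argument, which is simply stated as ``Iterating~\eqref{fwd1}'' before the theorem. The only cosmetic difference is that the paper iterates all the way down to $a_{p,1}(n)=\mult{n}{p}$ (your parenthetical alternative), whereas you stop at $\Delta a_{p,2}(n)$ and invoke Corollary~\ref{conj_parts_2}; both are the same computation.
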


We note that the results proved so far in this section are also valid if we replace the set of all partitions of $n$ by any subset of partitions of $n$, that is if we investigate the number of parts equal to $1$ or a prime $p$ in the image under $\pre_k$ of a subset of partitions of $n$.  

 We conclude this section by proving generalizations of  two conjectures of Beck, which were communicated to us privately.  To do so, we introduce some notation: we write $\cO(n)$ (respectively, $\cD(n)$) for the set of all partitions of $n$ with odd (respectively, distinct) parts.

\begin{definition}
    Let $z_k(n)$ be the total number of different parts that occur in  partitions in $\pre_k(\cO(n))$.
    That is,
    $$z_k(n) = \#\{x : x = a_1\cdots a_k, \text{ where } a_i, 1\leq i\leq k\text{ are parts in some odd partition $\lambda$ of } n\}.$$ 
\end{definition}

 \begin{theorem}\label{thm:z(2n) = z(2n-1)}
 For any positive integer $n$, and an odd positive integer $k$ we have $$z_k(2n)=z_k(2n-1).$$ 
    \end{theorem}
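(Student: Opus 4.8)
The plan is to exhibit an explicit size-preserving correspondence between the set of products realized by $\pre_k$ on odd partitions of $2n$ and those realized on odd partitions of $2n-1$. Write $E_k(m)$ for the set of integers $x$ that arise as a product $a_1\cdots a_k$ of (not necessarily distinct) parts drawn from some odd partition $\lambda$ of $m$; so $z_k(m)=|E_k(m)|$, and the goal is $|E_k(2n)|=|E_k(2n-1)|$. Since $k$ is odd, any product of $k$ odd numbers is odd, so $E_k(2n)$ and $E_k(2n-1)$ both consist of odd integers; this parity observation is the crux that makes the two sets comparable.

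First I would show $E_k(2n-1)\subseteq E_k(2n)$: given an odd partition $\lambda\vdash 2n-1$ and a choice of $k$ parts $a_1,\dots,a_k$ from it realizing $x$, I can form an odd partition of $2n$ by… here one has to be slightly careful, since adjoining a single $1$ changes the parity of the number of parts but not the oddness of the parts, and $2n$ is even while an odd partition of an even number has an even number of parts — so $\lambda\cup(1)$ is indeed a valid odd partition of $2n$ containing all of $a_1,\dots,a_k$, giving $x\in E_k(2n)$. For the reverse containment $E_k(2n)\subseteq E_k(2n-1)$, take an odd partition $\mu\vdash 2n$ together with $k$ chosen parts $a_1,\dots,a_k$ realizing an odd $x$. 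The key point: an odd partition of $2n$ must have an even (hence positive, since $n\ge 1$) number of parts, and among the parts there must exist a part $b$ not among the chosen multiset occurrences $a_1,\dots,a_k$ that I can decrement — more precisely I want to remove a unit of size from $\mu$ without disturbing the chosen parts and while keeping all parts odd. Removing $1$ from a part of size $\ge 3$ makes it even, which is not allowed; instead, the right move is to delete a part equal to $1$ if one exists, and otherwise to replace two equal parts $c,c$ (which must exist by a pigeonhole/parity argument on an even-length partition whose parts... ) — but this does not obviously preserve the chosen product. A cleaner route: show directly that $E_k(2n)=E_k(2n-1)$ by characterizing membership. Note $x=a_1\cdots a_k$ with $a_i$ odd lies in $E_k(m)$ iff $a_1+\cdots+a_k\le m$ and $m-(a_1+\cdots+a_k)$ can be written as a sum of odd parts, i.e. iff $a_1+\cdots+a_k\le m$ and $m-(a_1+\cdots+a_k)\neq 1$ (every nonnegative integer except $1$ is a sum of odd parts: $0$ is empty, and any $t\ge 2$ is $t$ ones if... no—$t$ ones works for all $t\ge 2$, and also $t=0$; only $t=1$... wait $t=1$ is one part $1$, that IS an odd partition) — so in fact every $t\ge 0$ is a sum of odd parts. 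Hence $x\in E_k(m)$ iff $x=a_1\cdots a_k$ for odd $a_i$ with $\sum a_i\le m$; equivalently $x$ admits a factorization into $k$ odd factors (with repetition) whose sum is at most $m$.

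With that reformulation, the theorem reduces to: for odd $x$, the minimum of $a_1+\cdots+a_k$ over factorizations $x=a_1\cdots a_k$ into $k$ positive odd integers is never exactly $2n$ — i.e. it is never equal to an even number in the "boundary" sense, more precisely $\min\le 2n \iff \min\le 2n-1$. But a sum of $k$ odd integers with $k$ odd is odd, so the minimum sum $S_{\min}(x,k)$ is an odd number; therefore $S_{\min}(x,k)\le 2n$ iff $S_{\min}(x,k)\le 2n-1$ (an odd number is $\le 2n$ precisely when it is $\le 2n-1$). Combining, $x\in E_k(2n)\iff S_{\min}(x,k)\le 2n\iff S_{\min}(x,k)\le 2n-1\iff x\in E_k(2n-1)$, so $E_k(2n)=E_k(2n-1)$ and in particular $z_k(2n)=z_k(2n-1)$.

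\textbf{Main obstacle.} The only real subtlety is the claim that every nonnegative integer is a sum of odd parts and hence membership in $E_k(m)$ depends only on the inequality $\sum a_i\le m$ (not on any congruence condition coming from the leftover) — once that is nailed down, the parity of $k$ does all the work via "an odd number is $\le 2n$ iff it is $\le 2n-1$." I would double-check the edge cases $m$ small relative to requiring $k$ parts (when $\cP(m)$ may contain no odd partition with $\ge k$ parts, both sides are comparably small/empty) and confirm the degenerate factorization bookkeeping when $x=1$.
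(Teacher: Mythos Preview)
Your argument is correct, and rests on the same parity observation as the paper's proof: a sum of $k$ odd integers with $k$ odd is itself odd, hence can never equal $2n$. The paper packages this more constructively: given $\mu\in\cO(2n)$ and $k$ chosen parts with product $s$, the parity argument shows $\mu$ must have at least one \emph{other} part $\mu_t$; one then deletes $\mu_t$ and inserts $\mu_t-1$ copies of $1$ to obtain $\mu^*\in\cO(2n-1)$ still realizing $s$. Your route instead abstracts to the characterization $x\in E_k(m)\iff S_{\min}(x,k)\le m$ (using that any leftover can be padded with $1$'s) and then observes that the odd integer $S_{\min}(x,k)$ satisfies $S_{\min}\le 2n\iff S_{\min}\le 2n-1$. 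The two arguments are equivalent in content; yours is marginally cleaner once the false starts are excised, while the paper's stays closer to the partition language throughout and avoids introducing $S_{\min}$.

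One small cleanup: your claim that ``every nonnegative integer is a sum of odd parts'' is correct (including $t=1$, which is the single part $1$), so the only constraint is indeed $\sum a_i\le m$. The edge cases you flag are harmless: if $m<k$ then both $E_k(m)$ and the set $\{x:S_{\min}(x,k)\le m\}$ are empty, since $\sum a_i\ge k$.
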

    
\begin{proof} 
First notice that, for any $m\geq 0$,  if $s$ is a part of $\pre_k(\lambda)$ for some $\lambda\in \cO(m)$ then $s$ is also a part of $\pre_k(\lambda\cup(1))$, where $\lambda\cup(1)$ is the partition of $m+1$ obtained from $\lambda$ by appending a part equal to $1$. Thus, $z_k(m)$ is an increasing function of $m$, and $z_k(2n-1)\leq z_k(2n)$. To prove the reverse inequality, suppose $s$ is a part of $\pre_k(\mu)$ for some $\mu\in \cO(2n)$. Hence $\mu$ has $k$ parts $\mu_{i_1}, \ldots, \mu_{i_k}$ with $i_1<\cdots <i_k$ and 
$\mu_{i_1}\cdots \mu_{i_k}=s$. Since $2n$ is even and $\{k,\mu_{i_1},\ldots,\mu_{i_k}\}$ are all odd, the partition $\mu$ must actually have at least one other part. Choose any part $\mu_t$ with $t\neq i_j$, $1\leq j\leq k$.  We create a partition $\mu^*\in\cO(2n-1)$ from $\mu\in \cO(2n)$ by removing the part $\mu_t$ and inserting $\mu_t-1$ parts equal to $1$ (if $\mu_t=1$, we just remove part $\mu_t$). Then $s$ is a part of $\pre_k(\mu^*)$. Hence $z_k(2n)\leq z_k(2n-1)$, and so $z_k(2n) = z_k(2n-1)$.
    \end{proof}

The partition $\mu^*$ defined in the proof of Theorem~\ref{thm:z(2n) = z(2n-1)} does not necessarily have a unique preimage $\mu$, but this is irrelevant to the evaluation of the function $z$. Indeed, the proof did not use the injectivity of the map $\pre_k$ on $\cO(n)$.

Let $p$ be an odd prime and let $o_{2,p}(n)$ be the total number of parts equal to $p$ among all partitions in the image $\pre_2(\cO(n))$, and let $d_{2,p}(n)$ be the total number of parts equal to $p$ among all partitions in the image $\pre_2(\cD(n))$. Beck conjectured the result below, which we prove here.

\begin{theorem} 
 For $n\geq 0$ and $p$ an odd prime, we have $$o_{2,p}(n)\geq d_{2,p}(n)$$ and $$o_{2,p}(n)\equiv d_{2,p}(n)\pmod 2.$$ 
\end{theorem}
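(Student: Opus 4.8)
The plan is to reduce both statements to a single combinatorial comparison between certain rooted objects. Recall from the proof of Theorem~\ref{thm:conj_parts_k}(b) that a part equal to $p$ in $\pre_2(\lambda)$ arises exactly from a rooted pair $(\hat p, \hat 1)$ of distinct parts of $\lambda$, so $o_{2,p}(n)$ counts the rooted partitions of $n$ into odd parts with one root equal to $p$ and one root equal to $1$, and $d_{2,p}(n)$ counts the rooted partitions of $n$ into distinct parts with one root equal to $p$ and one root equal to $1$. (Here I would be slightly careful: a partition with distinct parts has at most one part equal to $1$ and at most one part equal to $p$, so ``root'' just means ``contains that part''; whereas for odd partitions the roots genuinely select occurrences.) Thus $d_{2,p}(n)$ is the number of partitions of $n-p-1$ into distinct parts not using $1$ or $p$ — equivalently, by conjugation or by the standard glaisher-type bijection, something we can handle explicitly — while $o_{2,p}(n)$ is a weighted count of odd partitions of $n-p-1$ with a chosen occurrence of $1$ and a chosen occurrence of $p$ available (the roots having been removed).

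First I would set up the generating functions. Using Corollary~\ref{gf_a_p,k} style reasoning restricted to odd (resp. distinct) partitions, one gets
\[
\sum_{n\ge 0} o_{2,p}(n)\,q^n = \frac{q^{p+1}}{(1-q)(1-q^p)}\prod_{j\ge 1}\frac{1}{1-q^{2j-1}},
\qquad
\sum_{n\ge 0} d_{2,p}(n)\,q^n = q^{p+1}\prod_{\substack{j\ge 1\\ j\ne 1,\,j\ne p}}(1+q^{j}).
\]
The congruence $o_{2,p}(n)\equiv d_{2,p}(n)\pmod 2$ should then follow by working modulo $2$: over $\mathbb{F}_2$ we have $1+q^j \equiv \frac{1-q^{2j}}{1-q^j}$, so $\prod_{j\ge1}(1+q^j) \equiv \prod_{j\ge1}\frac{1}{1-q^{2j-1}}$ (the usual Euler ``distinct $=$ odd'' identity, which over $\mathbb{F}_2$ is an honest equality of power series), and removing the factors for $j=1$ and $j=p$ on the distinct side corresponds exactly, mod $2$, to multiplying the odd side by $(1-q)(1-q^p)$ and then by $\frac{1}{(1-q)(1-q^p)}$ — i.e. the two generating functions are congruent mod $2$ coefficientwise. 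I would write this out as a clean identity of elements of $\mathbb{F}_2[[q]]$.

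For the inequality $o_{2,p}(n)\ge d_{2,p}(n)$, I would give an injection from the distinct-part rooted objects into the odd-part rooted objects, namely Glaisher's bijection (repeatedly split each even part $2m$ into two copies of $m$ until all parts are odd) applied to the partition obtained from a distinct-part partition after deleting its (unique) parts $1$ and $p$; then re-append a rooted $1$ and a rooted $p$. This lands in the set of odd rooted partitions of $n$ with a root $\hat 1$ and a root $\hat p$, because Glaisher's map is a bijection from distinct partitions of $m$ to odd partitions of $m$ and here $m=n-p-1$. This map is injective (Glaisher's bijection is invertible; re-attaching the two rooted parts is clearly reversible), so $d_{2,p}(n)\le o_{2,p}(n)$. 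The main obstacle I anticipate is bookkeeping of the edge cases and of ``which occurrence is rooted'': for odd partitions the count $o_{2,p}(n)$ really does weight by the number of parts equal to $1$ and equal to $p$, so the target of the injection must be the rooted set, not the plain set, and I must make sure the image genuinely has an available occurrence of $1$ and of $p$ after Glaisher — this is automatic here only because we \emph{add} a fresh $\hat1$ and $\hat p$ after applying Glaisher to the $1$-and-$p$-free remainder, rather than trying to match roots through the bijection. Once that is pinned down, both parts are short.
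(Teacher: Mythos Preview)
Your argument is correct, but the paper takes a shorter and more unified route. Rather than separating the two claims, the paper applies Glaisher's bijection $G:\cD(n)\to\cO(n)$ directly to the \emph{whole} partition and uses a single parity observation: a distinct partition $\lambda$ contains both $1$ and $p$ if and only if $\mu=G(\lambda)$ has $\mult{\mu}{1}$ and $\mult{\mu}{p}$ both odd (since in Glaisher's map the part $2^a m$ with $m$ odd contributes $2^a$ copies of $m$, so $\mult{\mu}{m}$ is odd iff $m\in\lambda$). Hence $d_{2,p}(n)$ equals the number of $\mu\in\cO(n)$ with both multiplicities odd, while $o_{2,p}(n)=\sum_{\mu\in\cO(n)}\mult{\mu}{1}\,\mult{\mu}{p}$; the inequality and the congruence then both drop out at once, because a product of two nonnegative integers is at least $1$ when both are odd, and is odd iff both factors are odd. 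Your approach instead proves the congruence by comparing the two generating functions over $\mathbb{F}_2$ (which works, once one notes $1+q^j\equiv 1-q^j$ so that the factor $(1+q)(1+q^p)$ in the denominator on the distinct side matches $(1-q)(1-q^p)$ on the odd side), and proves the inequality by stripping off $1$ and $p$ first, applying Glaisher to the remainder, and then re-inserting a rooted $\hat 1$ and $\hat p$ with a fixed convention (e.g.\ rooting the last occurrence) to get an injection into the rooted odd partitions. Both are valid; the paper's version avoids the generating-function detour and the root-placement bookkeeping, and it explains ``why'' the two statements are really one.
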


\begin{proof} 
Glaisher's bijection (see, for example,~\cite{AE}) is used to prove that $|\cO(n)|=|\cD(n)|$.
    Under that bijection, a  partition $\lambda\in \cD(n)$  that has parts equal to both $1$ and $p$ maps to a partition $\mu\in \cO(n)$  with $\mult{\mu}{1}$ and $\mult{\mu}{p}$ both odd. This proves both assertions. 
\end{proof}

There is also a consequence of \cite{Li} related to the sets $\cO(n)$ and $\cD(n)$. As shown in \cite{bbm}, it follows from  the injectivity of $\pre_2$ that the cardinality of $\cO \cap \pre_2(\cP(n))$ is at least as large as the cardinality of $\cD \cap \pre_2(\cP(n))$. This is in contrast to Euler's identity that states that the number of odd partitions in $\pre_1(\cP(n))$ equals the number of distinct partitions in $\pre_1(\cP(n))$.

\section{Consequences of the full injectivity}\label{sec:consequences of full injectivity}

Recall that the plethysm $f[g]$ of symmetric functions is defined by substituting the monomials of $g$ into the variables of $f$. More precisely, if $g= \sum_{a} x^{a^i}$ where each $a^i$ is a degree vector and repeated monomials appear multiple times, then $f[g] := f(x^{a^1},x^{a^2},\dots). $

A consequence of the injectivity from \cite{Li} is that the family of plethysms $\{e_r[e_2]\}_{r \ge 1}$ determines the original elementary symmetric functions. 
Concretely, after specializing to finitely many variables $(x_1,\dots,x_\ell)$, each $e_r[e_2]$ is the $r$th elementary symmetric polynomial in the $\binom{\ell}{2}$ quadratic monomials $\{x_ix_j : i<j\}$. Thus, the full family $\{e_r[e_2]\}$ encodes the elementary symmetric functions of these quadratic monomials. From this data, one can recover the underlying multiset $\{x_ix_j : i<j\}$, and by Li's injectivity result, this multiset uniquely determines the variables $(x_1,\dots,x_\ell)$. Consequently, the family $\{e_r(x_1,\dots,x_\ell)\}$ can be reconstructed. While plethysm is, in general, not invertible, this special case
retains enough structure to recover the entire family $\{e_r\}_{r\ge1}$. Furthermore, assuming Conjecture \ref{prek:con}, that the map $\pre_k: 
\cP(n)\to\cP$ is injective, the same conclusion would follow for general $k$, meaning that the family of plethysms $\{e_r[e_k]\}$ evaluated at the parts of a partition $\lambda$ would determine the entire family $\{e_r\}$ evaluated at $\lambda$.

Put another way, if $f(x)$ is a monic degree-$d$ polynomial with roots $x_1,\ldots, x_d$, then Vieta's theorem says that the coefficient of $x^{d-r}$ is $\pm e_r(x_1,\ldots, x_d)$ where $e_r$ is the elementary symmetric polynomial. 
Thus if we know $\nu$ in the image of $\pre_2$ (and, hence, its size), and we know the size $n$ of a preimage of $\nu$, then we would know the coefficients of $x^{n-1}$ and $x^{n-2}$ in the monic polynomial whose roots are the parts of that preimage. I.e., the polynomial $\prod (x-\lambda_i)$. Li's result says that $\nu$ determines its preimage $\lambda$, which would give the rest of $f$'s coefficients, as well.

\section{Open questions}\label{sec:open}

The work of this paper, and in the papers cited here, suggests several directions for future research. We mention a few of these here.

The most obvious of these is still the resolution of Conjecture~\ref{prek:con}. While Theorems~\ref{th:injectivity} and~\ref{th:Li} make important and substantial strides towards establishing injectivity of $\pre_k$, the original conjecture itself still stands. The problem seems like it could be susceptible to several different approaches, and indeed the proofs of Theorems~\ref{th:injectivity} and~\ref{th:Li} already use distinct strategies themselves.

Much of this work has been concerned with the behavior of the map $\pre_k$, but we might also consider its image. That is, what exactly is the set of partitions $\pre_k(\cP(n))$? Or, $\pre_k(\injclass)$, or even $\pre_2(\cP(n))$?

\section*{Acknowledgments}

The authors are grateful to the Institute for Advanced Study and the organizers of the June 2025 IAS Research Community in Algebraic Combinatorics, at which this collaboration and project began.

Bridget Tenner's research is partially supported by NSF Grant DMS-2054436.

Part of this paper is based upon work supported by NSF grant DMS-1929284, while Chenchen Zhao was in residence at the Institute for Computational and Experimental Research in Mathematics in Providence, RI, during the Categorification and Computation in Algebraic Combinatorics semester program.

\end{document}